\newtheorem{thm}{Theorem}[section]
\newtheorem{lem}[thm]{Lemma}
\newtheorem{cor}[thm]{Corollary}
\newcommand{\Rmnum}[1]{\expandafter\@slowromancap\romannumeral #1@}
\begin{document}


\title{List edge coloring of outer-1-planar graphs\thanks{Supported by the Natural Science Basic Research Plan in Shaanxi Province of China (No.\,2017JM1010), the Fundamental Research Funds for the Central Universities (No.\,JB170706), and
the National Natural Science Foundation of China (Nos.\,11871055, 11301410).}}
\author{Xin Zhang\thanks{Email address: xzhang@xidian.edu.cn.}\\
{\small School of Mathematics and Statistics, Xidian University, Xi'an 710071, China}}

\maketitle

\begin{abstract}\baselineskip  0.6cm
A graph is outer-1-planar if it can be drawn in the plane so that all vertices are on the outer face and each edge is crossed at most once. It is known that
the list edge chromatic number $\chi'_l(G)$ of any outer-1-planar graph $G$ with maximum degree $\Delta(G)\geq 5$ is exactly its maximum degree. In this paper, we prove $\chi'_l(G)=\Delta(G)$ for outer-1-planar graphs $G$ with $\Delta(G)=4$ and with the crossing distance being at least 3.
\\[.2em]
Keywords: outerplanar graph; outer-1-planar graph; crossing distance; list edge coloring.
\end{abstract}

\baselineskip  0.6cm

\section{Introduction}

In this paper, all graphs are finite, simple and undirected. By $V(G)$, $E(G)$,
$\delta(G)$ and $\Delta (G)$, we denote the vertex set, the edge set, the minimum degree and the maximum degree
of a graph $G$, respectively. The \emph{order} $|G|$ of a graph $G$ is $|V(G)|$ and the \emph{size} of $G$ is $|E(G)|$. The \emph{distance} $d_G(u,w)$ between two vertices $u$ and $w$ of a connected graph $G$ is the minimum length of the path (i.e., the number of edges on the path) connecting them.

The problem of coloring a graph arises in many practical areas such as pattern matching, sports scheduling, designing seating plans, exam timetabling, the scheduling of taxis, and solving Sudoku puzzles \cite{Lewis}. There are many kinds of colorings of graphs, and in this paper we mainly focus on the edge coloring. Precisely,
an \emph{edge coloring} of a graph $G$ is an assignment of colors to the edges of $G$ such that every pair of adjacent edges receive different colors. An \emph{edge $k$-coloring} of a graph $G$ is an edge coloring of $G$ from a set of $k$ colors.
The minimum positive integer $k$ for which $G$ has an edge $k$-coloring, denoted by $\chi'(G)$, is the \emph{edge chromatic number} of $G$. The well-known Vizing's Theorem states that $\Delta(G)\leq \chi'(G)\leq \Delta(G)+1$ for any simple graph $G$.

Suppose that a set $L(e)$ of colors, called a \emph{list} of $e$, is assigned to each edge $e\in E(G)$. \emph{$L$-coloring} an edge $e$ means coloring $e$ with a color in $L(e)$.
An \emph{edge $L$-coloring} of $G$ is an edge coloring $c$ so that $c(e)\in L(e)$ for every $e\in E(G)$.
We say that $G$ is \emph{edge $k$-choosable} if $G$ has an edge $L$-coloring whenever $|L(e)|=k$ for every $e\in E(G)$. The minimum integer $k$ for which $G$ is edge $k$-choosable is the \emph{list edge chromatic number} of $G$, denoted by $\chi'_l(G)$.

The most famous open problem concerning list edge coloring is probably the \emph{list edge coloring conjecture} (LECC for short):
$$\chi'_{l}(G)=\chi'(G)$$ for any graph $G$.
This conjecture has a fuzzy origin. Jensen and Toft overview its history in their book \cite{JT-book}.

LECC is regarded as very difficult, and is still widely open. Some partial results were however obtained in the special case of
planar graphs. For example, LECC is true for planar graphs with maximum degree at least 12 \cite{Borodin.lcc}, series-parallel graphs \cite{JMT}, outerplanar graphs \cite{Wang2}, near-outerplanar graphs \cite{HW}, and pseudo-outerplanar (outer-1-planar) graphs with maximum degree at least 5 \cite{TZ}.

A graph is \emph{outer-1-planar} if it can be drawn in the plane so that all vertices are on the outer face and each edge is crossed at most once.
For example, $K_{2,3}$ and $K_4$ are outer-1-planar graphs.
Outer-1-planar graphs were first introduced by Eggleton \cite{Eggleton} who called them \emph{outerplanar graphs with edge crossing number one}, and were also investigated under the notion of \emph{pseudo-outerplanar graphs} by Zhang, Liu and Wu \cite{LTC,ZPOPG}.

 In this paper, we use some notions and notations from \cite{total}. A drawing of an outer-1-planar graph in the plane such that its outer-1-planarity is satisfied is an \emph{outer-1-plane graph}, and we call it \emph{good} if the number of its crossings is as small as possible.
Note that every crossing in an outer-1-plane graph $G$ is generated by two mutually crossed edges, thus every crossing $c$ correspond to a vertex set $M_G(c)$ of size four, where $M_G(c)$ consists of the end-vertices of the two edges that generate $c$.
For crossings $c_1$ and $c_2$ in an outer-1-plane graph $G$, define $d_G(c_1,c_2)=\min\{d_G(v_1,v_2)~|~v_1\in M_G(c_1)~{\rm and}~v_2\in M_G(c_2)\}$ to be the \emph{distance} in the drawing $G$ between $c_1$ and $c_2$.
By $$\vartheta(G)=\min\{d_{G'}(c_1,c_2)~|~G'~ is~a~good~drawing~of~G,~and~c_1,c_2~are~distinct~crossings~of~G'\}$$we denote the \emph{crossing distance} of an outer-1-planar graph $G$. Note that we will set $\vartheta(G)=\infty$ if $G$ has a good drawing with at most one crossing.

For every outer-1-planar graph $G$, it was proved in \cite[Theorem 5.3]{ZPOPG} that $\chi'(G)=\Delta(G)$ if $\Delta(G)\geq 4$, and in \cite[Theorem 2.5]{TZ} that $\chi'_l(G)=\chi'(G)=\Delta(G)$ if $\Delta(G)\geq 5$. In this paper we will prove the following results.

\begin{thm}\label{o1p}
Let $G$ be an outer-1-planar graph. If
$\Delta(G)=4$ and $\vartheta(G)\geq 3$,
then $\chi'_l(G)=\chi'(G)=4$.
\end{thm}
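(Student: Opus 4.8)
The plan is to argue by contradiction via the discharging-free structural method standard for list edge coloring of sparse planar graphs. Suppose $G$ is a counterexample that is \emph{edge-minimal}: $\Delta(G)=4$, $\vartheta(G)\ge 3$, $\chi'_l(G)>4$, but every proper subgraph of $G$ is edge $4$-choosable. Fix a good drawing of $G$ realizing $\vartheta(G)\ge 3$; so any two crossings are at distance at least $3$, which in particular forbids two crossings sharing a vertex, sharing an edge, or lying on a common short path. Fix lists $L(e)$ with $|L(e)|=4$ for which no $L$-coloring exists. The goal is to exhibit a \emph{reducible configuration}: a local substructure that cannot appear in $G$, because in its presence one could delete or contract a few edges, invoke minimality to $L$-color the rest, and then extend greedily — each uncolored edge $e$ has at most $4$ forbidden colors but we will arrange that at least one of the $4$ colors in $L(e)$ survives, typically using a Vizing-fan / Kempe-chain recoloring when the naive count is tight.

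First I would collect the light-vertex structure of outer-1-plane graphs. By Euler-type counting for outer-1-planar graphs (the machinery behind \cite[Theorem 5.3]{ZPOPG} and the total-coloring paper \cite{total}) one knows such graphs are sparse: there are vertices of degree $1$ or $2$, and more refined statements like ``every outer-1-plane graph with $\delta\ge 2$ contains two adjacent $2$-vertices, or a $2$-vertex adjacent to a $3^{-}$-vertex, or certain configurations around a crossing.'' I would first dispose of the trivial reducibles: a vertex of degree $\le 2$, or a $2$-vertex both of whose neighbors have degree $\le 3$, is immediately reducible since the removed edge sees at most $3$ colors. Likewise two adjacent $2$-vertices are reducible. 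This reduces us to the case $\delta(G)\ge 2$ and forces the remaining $2$-vertices to be adjacent only to $4$-vertices, and similarly constrains $3$-vertices; combined with the sparseness count, $G$ must contain one of a short list of dense local configurations, and — crucially — by $\vartheta(G)\ge 3$ at most one crossing is involved in any such configuration, and around that crossing the four ``corner'' vertices have no other crossed edge and are far from every other crossing.

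The heart of the argument is the \emph{crossing analysis}: around the unique local crossing $c$ with $M_G(c)=\{x_1,x_2,y_1,y_2\}$ (edges $x_1x_2$ and $y_1y_2$ crossing), the outer-1-plane structure forces the boundary arcs between consecutive corners to be essentially empty or to carry only low-degree vertices, so one obtains a small planar-like ``kite'' that must interact with a nearby $2$- or $3$-vertex; then one deletes the crossed edges (making the rest outer-\emph{planar}-like on the relevant patch, hence $4$-choosable by minimality), $L$-colors it, and puts back $x_1x_2$ and $y_1y_2$ in the right order. The delicate point is that $x_1x_2$ and $y_1y_2$ cross, so each of them is adjacent to all of $d(x_1)-1+d(x_2)-1$ resp. $d(y_1)-1+d(y_2)-1$ already-colored edges but \emph{not} to each other; with $\Delta=4$ that is at most $6$ neighbors, which is too many for a naive count, so here is where I expect the main obstacle: one must use the bipartite/Vizing-fan recoloring to free a color, and must check that the recoloring does not escape the patch — which is exactly what $\vartheta(G)\ge 3$ buys, since no other crossing lies within recoloring range. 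I would therefore organize the proof as a sequence of Claims, each asserting ``configuration $(\star)$ is reducible,'' proved by (i) identifying the subgraph to reduce, (ii) coloring it by minimality, (iii) extending with an explicit Kempe-swap if the color count is tight; and conclude by showing the structure lemma forces one of these configurations, contradicting the choice of $G$. The single hardest step, I anticipate, is the tight crossed-edge case where both $x_i$ and both $y_j$ have degree $4$ and lie near a $2$-vertex: making the two Kempe chains for $x_1x_2$ and $y_1y_2$ compatible, and ruling out the chain connecting the two endpoints of a crossed edge, is where all the care goes.
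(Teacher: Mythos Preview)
Your high-level architecture --- minimal counterexample, unavoidable local configurations, reducibility --- matches the paper's. But two concrete aspects of your plan are genuine gaps, not just differences in emphasis.

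First, you propose to handle the tight cases (``each uncolored edge sees up to $6$ colored neighbors, too many for $4$ colors'') by Vizing-fan / Kempe-chain recoloring. That technique does not transfer to \emph{list} edge coloring: a Kempe chain swaps two fixed colors along an alternating path, and there is no reason the swapped color lies in the list of every edge on the path. The paper never uses Kempe chains. Instead, for each unavoidable configuration it deletes the solid (low-degree) vertices, records for every surviving edge the \emph{size} of its residual list, and then proves from scratch a battery of small-graph list-coloring lemmas (the graphs $T_1,\dots,T_6$, $R_0,\dots,R_4$, $\widehat{R}_1,\dots,\widehat{R}_9$) by direct, sometimes intricate, case analysis on the lists themselves. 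These lemmas are the real content of the reducibility step; you cannot shortcut them by a recoloring argument, and your proposal offers no substitute.

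Second, you locate the role of the hypothesis $\vartheta(G)\ge 3$ in the reducibility step (``the recoloring does not escape the patch''). In the paper it plays no role there at all: every reducibility lemma is a statement about a fixed small graph with prescribed list sizes and is proved without any global hypothesis. The crossing-distance condition is used exclusively in the \emph{structural} lemma. With only $\vartheta(G)\ge 1$ the unavoidable list includes a configuration $G_{11}$ that is \emph{not} reducible; the hypothesis $\vartheta(G)\ge 3$ is exactly what allows one to trade $G_{11}$ for the larger but reducible configurations $S_1,S_2,S_3$. Your sketch does not identify this obstruction, and your proposed structural route (``Euler-type counting'') gives no indication of how you would reach a list of configurations all of which are reducible. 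That is where the real difficulty of the $\Delta=4$ case lies.
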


\begin{thm}\label{3}
If $G$ is an outer-1-planar graph with $\Delta(G)=3$, then $\chi'_l(G)\leq 4$.
\end{thm}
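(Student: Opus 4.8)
I would prove the (formally stronger) statement that \emph{every} graph $G$ with $\Delta(G)\le 3$ is edge $4$-choosable; outer-1-planarity will be used only to ensure that deleting an edge keeps us inside the class, which is automatic. The plan is a minimal-counterexample argument, where the reduction step shows the counterexample is cubic, followed by passing to the line graph $L(G)$ and invoking the list-coloring version of Brooks' theorem.

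Concretely, I would take a counterexample $G$ with $|E(G)|$ minimum, together with a list assignment $L$, $|L(e)|=4$ for all $e\in E(G)$, admitting no edge $L$-coloring. Since an edge $L$-coloring of each component combines to one of $G$, minimality forces $G$ connected; also $G$ has an edge, so $|V(G)|\geq 2$. Next, if $e=uv$ has $d_G(u)\le 2$, then $G-e$ is a smaller graph, hence edge-$L$-colorable by minimality, and $e$ is adjacent to at most $(d_G(u)-1)+(d_G(v)-1)\le 1+2=3$ edges, so a color in $L(e)$ remains for $e$; this contradiction shows every edge of $G$ has both endpoints of degree $3$, i.e.\ $G$ is cubic (and $|V(G)|\geq 4$ is even, so $|E(G)|\geq 6$). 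Now consider $L(G)$: it is connected and, since $G$ is cubic, $4$-regular, and it has $|E(G)|\geq 6$ vertices so it is not $K_5$; being $4$-regular it is not an odd cycle. By the Erdős–Rubin–Taylor form of the list Brooks theorem, a connected graph that is not degree-choosable has every block equal to a complete graph or an odd cycle — but a complete block of a $4$-regular graph would be $K_5$ and would be the whole graph, and a graph all of whose blocks are odd cycles has a vertex of degree $\le 2$; neither is possible here. Hence $L(G)$ is degree-choosable, and being $4$-regular it is $4$-choosable, i.e.\ $G$ has an edge $L$-coloring, a contradiction. This yields $\chi'_l(G)\le 4$.

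The only delicate point is the last paragraph's verification that $L(G)$ avoids the Brooks exceptions and that the cut-vertex case of Erdős–Rubin–Taylor is handled; as sketched this reduces to elementary size- and degree-counting, so I expect no real obstacle. If one preferred an argument staying within the structural framework of the rest of the paper and avoiding the list Brooks theorem, the work would shift to an explicit reducibility analysis of cubic outer-1-planar graphs (using the structural description of outer-1-planar graphs from \cite{ZPOPG}); there the genuinely awkward configuration is a triangle, since deleting a triangle of $G$ leaves a $K_3$ to be list-colored inside $L(G)$ and $K_3$ is not $2$-choosable, so one would need to track the interaction of the forbidden colors rather than a naive count. I would avoid that route in favor of the short line-graph argument above.
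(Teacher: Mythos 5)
Your argument is correct, but it is a genuinely different route from the paper's. You prove the stronger, purely degree-based statement that every graph with $\Delta\le 3$ is edge $4$-choosable: a minimum counterexample is connected and cubic (an edge with an endpoint of degree at most $2$ meets at most three colored edges, so it could be colored last), hence its line graph is connected, $4$-regular, on at least six vertices, and one then checks it is not a Gallai tree, so the Borodin/Erd\H{o}s--Rubin--Taylor degree-choosability theorem applies. The one imprecision is your block analysis: a complete block of a $4$-regular graph need not be $K_5$ if all its vertices are cut vertices, so the clean way to exclude the Gallai-tree exceptions is via an end-block, whose non-cut vertex has degree $|B|-1$ (complete case, forcing $K_5$ and then no cut vertex at all) or $2$ (odd-cycle case); you flag this yourself and the repair is routine, so this is not a real gap. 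The paper instead stays inside its structural framework: it takes a minimum counterexample $H$, notes $\delta(H)\ge 2$, invokes its reducibility theorem (Theorem \ref{core-theorem}) to exclude the configurations $G_1$, $G_3$, $G_{10}$, and then quotes the structural result of Zhang, Liu and Wu that every outer-1-planar graph with $\Delta\le 3$ and $\delta\ge 2$ contains one of $G_1$, $G_3$, $G_{10}$, yielding the contradiction. Your route is shorter, avoids any use of outer-1-planarity, and gives a more general conclusion; the paper's route reuses machinery it already needs for Theorem \ref{o1p} and keeps the proof elementary and self-contained rather than relying on the list version of Brooks' theorem.
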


The following corollary from Theorem \ref{3} is immediate.

\begin{cor}
If $G$ is an outer-1-planar graph with $\Delta(G)=3$ and $\chi'(G)=4$, then $\chi'_l(G)=\chi'(G)$.
\end{cor}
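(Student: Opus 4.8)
The plan is to translate the problem into vertex list-coloring of the line graph and then quote the list version of Brooks' theorem. Recall that an edge $L$-coloring of $G$ is nothing but a vertex $L$-coloring of the line graph $L(G)$, so $\chi'_l(G)=\chi_l\big(L(G)\big)$, and it suffices to show $\chi_l(L(G))\le 4$. Since $G$ is simple and $\Delta(G)=3$, an edge $uv$ of $G$ is adjacent to at most $(\deg_G(u)-1)+(\deg_G(v)-1)\le 2(\Delta(G)-1)=4$ other edges, so $\Delta(L(G))\le 4$. Because $k$-choosability is a componentwise property, it is enough to bound $\chi_l(C)$ for each connected component $C$ of $L(G)$, and such a $C$ equals $L(G')$ for the corresponding component $G'$ of $G$.

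Next I would apply the list version of Brooks' theorem (Vizing; Erd\H{o}s, Rubin and Taylor): if a connected graph $H$ is neither complete nor an odd cycle, then $\chi_l(H)\le\Delta(H)$. Applied to a component $C$ of $L(G)$ this splits into three cases. If $C$ is neither complete nor an odd cycle, then $\chi_l(C)\le\Delta(C)\le\Delta(L(G))\le 4$. If $C$ is an odd cycle, then $\chi_l(C)=3\le 4$. If $C=K_m$ is complete, then $L(G')$ is complete, which forces every two edges of $G'$ to be adjacent; hence $G'$ is a star $K_{1,m}$ (so $m=\Delta(K_{1,m})\le\Delta(G)=3$) or the triangle $K_3$ (so $m=3$), and in either case $\chi_l(C)=m\le 4$. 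In all cases $\chi_l(C)\le 4$, so $\chi'_l(G)=\chi_l(L(G))\le 4$. This proves Theorem~\ref{3}; note that outer-1-planarity is not actually used, and the argument in fact gives $\chi'_l(G)\le 4$ for every graph $G$ with $\Delta(G)\le 3$.

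The Corollary then follows at once: $\chi'(G)\le\chi'_l(G)$ holds for every graph (edge $k$-choosability implies, by taking all lists equal to a fixed $k$-set, that a proper edge $k$-coloring exists), while Theorem~\ref{3} gives $\chi'_l(G)\le 4$; if in addition $\chi'(G)=4$, then $4\le\chi'_l(G)\le 4$, so $\chi'_l(G)=\chi'(G)$.

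There is essentially no obstacle here beyond bookkeeping: the only point requiring care is to check that no component of $L(G)$ is a copy of $K_5$, which would need $K_{1,5}\subseteq G$ and is impossible when $\Delta(G)=3$ — and this is exactly where the degree hypothesis enters. If one insisted on a proof that does not invoke the list version of Brooks' theorem, the real work would be the $3$-regular case, where one cannot simply delete a low-degree vertex and recolor its edge; instead one would delete a suitably chosen edge (or short path), edge-$4$-choose the rest by induction, and then exploit the residual freedom at the deleted edge, which is precisely the part of a Brooks-type argument one would be re-proving by hand.
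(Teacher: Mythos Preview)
Your argument is correct, and the final deduction of the Corollary from the bound $\chi'_l(G)\le 4$ is exactly the paper's: the paper simply says the Corollary is immediate from Theorem~\ref{3}, and you reproduce that one-line sandwich $4=\chi'(G)\le\chi'_l(G)\le 4$.

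Where you genuinely diverge is in how you obtain Theorem~\ref{3}. The paper proves it via its structural machinery: a minimal counterexample $H$ with $\Delta(H)=3$ and $\delta(H)\ge 2$ must, by a cited result on outer-1-planar graphs, contain one of the configurations $G_1$, $G_3$, $G_{10}$, and each of these is shown reducible for edge $4$-choosability in Theorem~\ref{core-theorem}. You instead pass to the line graph and invoke the list version of Brooks' theorem, checking that no component of $L(G)$ can be $K_m$ with $m\ge 5$ because that would force a $K_{1,m}$ in $G$. This is shorter, uses no outer-1-planar structure at all, and yields the stronger conclusion $\chi'_l(G)\le 4$ for \emph{every} graph with $\Delta(G)\le 3$; indeed this fact is classical. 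What the paper's approach buys, by contrast, is uniformity: the same reducible-configuration framework handles both the $\Delta=3$ case here and the main $\Delta=4$ result (Theorem~\ref{o1p}), where no analogue of your line-graph shortcut is available.
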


Note that there exist infinitely many outer-1-planar graph $G$ with $\Delta(G)=3$ and $\chi'(G)=4$, see \cite[Theorem 3.2]{edge}.


\section{Preliminaries}

Throughout this section, $G$ will be a good 2-connected outer-1-plane graph, and by $v_1, \ldots, v_{|G|}$ we denote the vertices of $G$ with clockwise ordering on the boundary.

Let $\mathcal{V}[v_i,v_j]=\{v_i, v_{i+1}, \ldots, v_j\}$ and $\mathcal{V}(v_i,v_j)=\mathcal{V}[v_i,v_j]\backslash \{v_i,v_j\}$, where the subscripts are taken modulo $|G|$. Set $\mathcal{V}[v_i,v_i]=V(G)$ and $\mathcal{V}(v_i,v_i)=V(G)\setminus \{v_i\}$. By $G[v_i,v_j]$ and $G(v_i,v_j)$, we denote the subgraph of $G$ induced by $\mathcal{V}[v_i,v_j]$ and $\mathcal{V}(v_i,v_j)$, respectively.
If there is no edge between $\mathcal{V}(v_i,v_j)$ and $\mathcal{V}(v_j,v_i)$, then $\hat{G}_{i,j}$ denotes the graph obtained from $G[v_i,v_j]$ by adding edge $v_iv_j$ if it does not exist in $G[v_i,v_j]$ (otherwise $\hat{G}_{i,j}$ is $G[v_i,v_j]$ itself). Clearly, $\hat{G}_{i,j}$ is a good 2-connected outer-1-plane graph if $G$ is such a graph.

A vertex set $\mathcal{V}[v_i,v_j]$ with $i\neq j$ is a \emph{non-edge} if $j=i+1$ and $v_iv_j\not\in E(G)$, and is a \emph{path} if $v_k v_{k+1}\in E(G)$ for all $i\leq k<j$. An edge $v_iv_j$ in $G$ is a \emph{chord} if $|j-i|\neq 1$ or $|G|-1$. By $\mathcal{C}[v_i,v_j]$, we denote the set of chords $xy$ with $x,y\in \mathcal{V}[v_i,v_j]$.

In any figure of this paper, the degree of a solid (or hollow) vertex is exactly (or at least) the number of edges that are incident with it, respectively, and a solid vertex is distinct to every another vertex
but two hollow vertices may be identified unless stated otherwise.

We now collect some useful results that will be applied in the next sections.

\begin{lem}\label{path}{\rm \cite[Claim 1]{ZPOPG}}
Let $v_a$ and $v_b$ be vertices of $G$. If there are no crossed chords in $\mathcal{C}[v_a,v_b]$ and no edges between
$\mathcal{V}(v_a,v_b)$ and $\mathcal{V}(v_{b},v_{a})$, then $\mathcal{V}[v_a,v_b]$ is either non-edge or path.
\end{lem}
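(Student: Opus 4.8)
The statement I need to prove: if $v_a, v_b$ are vertices of the good 2-connected outer-1-plane graph $G$, there are no crossed chords in $\mathcal{C}[v_a,v_b]$, and no edges between $\mathcal{V}(v_a,v_b)$ and $\mathcal{V}(v_b,v_a)$, then $\mathcal{V}[v_a,v_b]$ is either a non-edge or a path.

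Let me think about what this is saying. $\mathcal{V}[v_a,v_b] = \{v_a, v_{a+1}, \ldots, v_b\}$ consecutive on the boundary. The condition "non-edge" requires $b = a+1$ and $v_av_b \notin E(G)$. The condition "path" requires all boundary edges $v_kv_{k+1}$ present for $a \le k < b$.

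So I need to show: under the hypotheses, either it's the trivial two-vertex non-edge case, or all consecutive boundary edges are present. The negation would be: $b > a+1$ (or $b = a+1$ with the edge present — but that's trivially a path), AND some boundary edge $v_kv_{k+1}$ with $a \le k < b$ is missing.

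Here's my plan. Suppose for contradiction that $\mathcal{V}[v_a,v_b]$ is neither a non-edge nor a path. Then there's a missing boundary edge $v_kv_{k+1}$ with $a \le k < b$, and we're not in the trivial case, so $\mathcal{V}[v_a,v_b]$ has at least 3 vertices. Consider the subgraph structure. Since there are no edges between $\mathcal{V}(v_a,v_b)$ and $\mathcal{V}(v_b,v_a)$, the vertex $v_{k+1}$ (if it is an interior vertex of the interval, i.e. $a < k+1 < b$, possibly after relabeling so the missing edge is interior — need to be a bit careful at the endpoints) has all its neighbors inside $\mathcal{V}[v_a,v_b]$. By 2-connectedness, $v_{k+1}$ has degree at least 2, so it is incident to at least two edges, both going to vertices in $\mathcal{V}[v_a,v_b]$; since $v_kv_{k+1}$ is missing, at least one of these is a chord of $\mathcal{C}[v_a,v_b]$ (an edge $v_{k+1}v_j$ with $j$ not adjacent in the boundary ordering). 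I then want to derive that two such chords — or a chord together with the "separating" structure — must cross, contradicting the no-crossed-chords hypothesis. The key geometric fact is that in an outer-1-plane drawing, if $v_kv_{k+1}$ is a missing boundary edge with vertices on both sides inside the interval, any edge "jumping over" $v_{k+1}$ from the left part $\mathcal{V}[v_a, v_k]$ to the right part $\mathcal{V}[v_{k+1}, v_b]$ would have to cross an edge incident to $v_{k+1}$ or be crossed itself; combined with 2-connectivity forcing enough edges, one forces a crossing among chords in $\mathcal{C}[v_a,v_b]$.

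Alternatively — and this is probably cleaner, and I suspect closer to what the authors do — I would argue by a minimal counterexample / induction on $|\mathcal{V}[v_a,v_b]|$, using the operation $\hat{G}_{i,j}$ defined just above the lemma. Take a missing boundary edge $v_kv_{k+1}$ in the interior of the interval. The hypotheses guarantee no edge leaves the interval, so I can split at $v_{k+1}$: look at $\mathcal{V}[v_a, v_{k+1}]$ and $\mathcal{V}[v_{k+1}, v_b]$. I want to see that the no-crossed-chords condition is inherited by these shorter intervals and that 2-connectivity is still available (possibly via $\hat{G}$), then apply induction to conclude each piece is a path, and finally glue — but gluing fails exactly at the missing edge $v_kv_{k+1}$, which is the contradiction. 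The base case is the two- or three-vertex interval, handled by hand using 2-connectedness.

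The main obstacle I anticipate is the careful bookkeeping of \textbf{chord/crossing locality}: I must be sure that a crossing I produce actually lies inside $\mathcal{C}[v_a,v_b]$ (both crossing edges have both endpoints in $\mathcal{V}[v_a,v_b]$), rather than involving the added edge $v_av_b$ or an edge leaving the interval — the latter is ruled out by hypothesis, but I need to track it. Also delicate: handling the cases where the missing boundary edge is incident to $v_a$ or $v_b$ themselves (the endpoints of the interval are not "interior", so the degree-2 argument must be applied to a genuinely interior vertex; if every missing edge is at an endpoint I need a separate short argument, likely shrinking the interval first). Once the locality of crossings is nailed down, the contradiction with "no crossed chords in $\mathcal{C}[v_a,v_b]$" is immediate, and the rest is routine.
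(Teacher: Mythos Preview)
The paper does not prove Lemma~\ref{path}; it is quoted as \cite[Claim~1]{ZPOPG} with no argument supplied here, so there is no in-paper proof to compare your proposal against.

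On your proposal itself, both sketches point the right way but each has a concrete gap. In approach~(a), the claim that a chord ``jumping over'' $v_{k+1}$ must cross some edge incident to $v_{k+1}$ is false in general: if $v_pv_q$ jumps over $v_{k+1}$ and every neighbour $v_j$ of $v_{k+1}$ satisfies $p\le j\le q$, those edges are nested inside $v_pv_q$ and nothing crosses. You would need an extremal choice (e.g.\ take the jumping chord with $q-p$ minimal and then argue inside $[v_p,v_q]$) to actually force an interleaving. In approach~(b), the hypothesis ``no edges between $\mathcal V(v_a,v_{k+1})$ and $\mathcal V(v_{k+1},v_a)$'' is \emph{not} inherited, because edges from $\mathcal V(v_a,v_k)$ to $\mathcal V(v_{k+2},v_b)$ are allowed by the original hypotheses; for the same reason $\hat G_{a,k+1}$ need not even be defined. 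A cleaner route that bypasses both obstacles: observe that $\hat G_{a,b}$ is a 2-connected outer-1-plane graph whose chords are exactly those in $\mathcal C[v_a,v_b]\setminus\{v_av_b\}$, none of which cross by hypothesis, so $\hat G_{a,b}$ is in fact 2-connected \emph{outerplanar}; the outer face of a 2-connected outerplane graph is a Hamiltonian cycle, forcing $v_iv_{i+1}\in E(\hat G_{a,b})$ for all $a\le i<b$, and since the only edge possibly added in forming $\hat G_{a,b}$ is $v_av_b$, all these boundary edges already lie in $G$.
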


In what follows, when mentioning the configuration $G_i$ with $1\leq i\leq 14$ or $S_i$ with $1\leq i\leq 3$ we always refer to the corresponding picture in Figures \ref{str} or \ref{s1s2s3}.

Saying that $G$ \emph{contains} $G_i$ or $S_i$, we mean that $G$ contains a subgraph isomorphic to $G_i$ or $S_i$ such that the degree in $G$ of any solid (resp.\,hollow) vertex in that picture is exactly (resp.\,at least) the number of edges that are incident with it there.

For two distinct vertices $v_a$ and $v_b$ on the outer boundary of $G$, saying $G[v_a,v_b]$ \emph{properly contains} $G_i$ or $S_i$, we mean that $G[v_a,v_b]$ contains $G_i$ or $S_i$ so that neither $v_a$ nor $v_b$ corresponds to a solid vertex or a hollow vertex with a degree restriction in the picture of $G_i$ or $S_i$.

\begin{lem}\label{noncrossed}
Let $\mathcal{V}[v_a,v_b]$ with $b-a\geq 3$ be a path in $G$. If $\Delta(G)\leq 4$ and there are no crossed chords in $\mathcal{C}[v_a,v_b]$ and
no edges between $\mathcal{V}(v_a,v_b)$ and $\mathcal{V}(v_{b},v_{a})$, then $G[v_a,v_b]$ properly contains $G_1$ or $G_2$.
\end{lem}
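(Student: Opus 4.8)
The plan is to read off the local structure of the subdrawing on $\mathcal{V}[v_a,v_b]$ directly from the hypotheses. First I would observe that the drawing induced on $G[v_a,v_b]$ has no crossing: each of its edges is either a boundary edge $v_kv_{k+1}$, which joins two consecutive vertices of the outer boundary and hence is never crossed, or a chord with both ends in $\mathcal{V}[v_a,v_b]$, that is, a member of $\mathcal{C}[v_a,v_b]$, and no such chord is crossed by hypothesis. Second, since no edge joins $\mathcal{V}(v_a,v_b)$ to $\mathcal{V}(v_b,v_a)$, every vertex $v_k$ with $a<k<b$ has all of its neighbours in $\mathcal{V}[v_a,v_b]$, so $\deg_G(v_k)=\deg_{G[v_a,v_b]}(v_k)$. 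Thus, for the purpose of locating light configurations, $G[v_a,v_b]$ is the outerplane graph obtained from the path $v_av_{a+1}\cdots v_b$ by adding the pairwise non-crossing chords of $\mathcal{C}:=\mathcal{C}[v_a,v_b]$, and it is enough to exhibit a suitable ``ear''.

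If $\mathcal{C}=\emptyset$, then every edge of $G[v_a,v_b]$ that is incident with an interior vertex is a path edge, so each of $v_{a+1},\dots,v_{b-1}$ has degree exactly $2$ in $G$; as $b-a\geq 3$, there are at least two of them, and $v_{a+1}$ and $v_{a+2}$ are adjacent $2$-vertices lying in $\mathcal{V}(v_a,v_b)$. Together with $v_a$ and $v_{a+3}$ they exhibit a copy of $G_1$ whose solid vertices lie in $\mathcal{V}(v_a,v_b)$, so $G[v_a,v_b]$ properly contains $G_1$.

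If $\mathcal{C}\neq\emptyset$, choose a chord $v_iv_j\in\mathcal{C}$ with $a\leq i<j\leq b$ and $j-i$ minimum; note $2\leq j-i\leq |G|-2$. The crucial step is to show that no chord of $G$ other than $v_iv_j$ has an endpoint in $\{v_{i+1},\dots,v_{j-1}\}$. Indeed, a chord with both endpoints in $\{v_i,\dots,v_j\}$ distinct from $v_iv_j$ would be shorter, contradicting minimality; and a chord with one endpoint $v_k$, $i<k<j$, and the other endpoint outside $\{v_i,\dots,v_j\}$ would, since no edge leaves $\mathcal{V}(v_a,v_b)$, have its other endpoint in $\mathcal{V}[v_a,v_b]$ and would therefore cross $v_iv_j$ by the cyclic order of the boundary, contradicting that $v_iv_j\in\mathcal{C}$ is uncrossed. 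Hence each $v_k$ with $i<k<j$ has precisely the two neighbours $v_{k-1}$ and $v_{k+1}$ and lies in $\mathcal{V}(v_a,v_b)$. If $j-i=2$, then $v_{i+1}$ is a $2$-vertex and $v_i,v_{i+1},v_j$ span a triangle (since $v_iv_j\in E(G)$), so $G[v_a,v_b]$ properly contains $G_2$; if $j-i\geq 3$, then $v_{i+1}$ and $v_{i+2}$ are adjacent $2$-vertices in $\mathcal{V}(v_a,v_b)$, so $G[v_a,v_b]$ properly contains $G_1$.

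I expect the main obstacle to be the crucial step in the second case — that the shortest chord bounds a chord-free arc — because that is exactly where one must combine the two hypotheses (no crossed chord in $\mathcal{C}[v_a,v_b]$ and no edge between $\mathcal{V}(v_a,v_b)$ and $\mathcal{V}(v_b,v_a)$) with the cyclic order of the outer vertices; the rest is a short case check. In writing it up one should also check the boundary positions $i=a$ and $j=b$ to make sure that the solid vertices of the exhibited $G_1$ or $G_2$ never coincide with $v_a$ or $v_b$, which is what makes the containment proper; when $\mathcal{C}=\emptyset$ one can alternatively quote Lemma~\ref{path} to get that $\mathcal{V}[v_a,v_b]$ is a path and then argue as above.
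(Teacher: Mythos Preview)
Your overall strategy---pass to an innermost/shortest chord and read off the light configuration beneath it---is exactly the paper's approach, and your treatment of the cases $\mathcal{C}=\emptyset$ and $j-i\ge 3$ is correct and matches the paper.

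The gap is in the case $j-i=2$. A single triangle $v_iv_{i+1}v_j$ with $d_G(v_{i+1})=2$ is \emph{not} the configuration $G_2$. From the correspondence table in Theorem~\ref{core-theorem}, $G_2$ reduces to the graph $T_3$, which is two triangles sharing a common vertex of degree~$4$, with the two tips adjacent to that vertex having degree~$2$. A lone triangle with a $2$-vertex at its apex yields neither $G_1$ (which needs the $2$-vertex to have a neighbour of degree at most~$3$) nor $G_2$ when both base vertices $v_i,v_j$ have degree~$4$ in $G$; and that situation genuinely occurs, e.g.\ when $b-a\ge 4$ and the chords $v_av_{a+2}$, $v_{a+2}v_{a+4}$ are present together with further chords at $v_a$.

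The paper closes this case with an extra iterative step that your argument is missing. Having found the innermost chord $v_{a'}v_{b'}$ with $b'-a'=2$, choose $t\in\{a',b'\}$ with $v_t\notin\{v_a,v_b\}$ (possible since $b'-a'<b-a$). If $d_G(v_t)\le 3$, then the $2$-vertex $v_{a'+1}$ together with its neighbour $v_t$ gives $G_1$, properly contained. If $d_G(v_t)=4$, then $v_t$ carries a second chord $v_tv_{c'}$ with $c'\in\{a,\dots,b\}\setminus\{a',b'\}$; if $|c'-t|=2$ one now has two length-$2$ chords meeting at the degree-$4$ vertex $v_t$, so $d(v_{t-1})=d(v_{t+1})=2$ and $G_2$ is properly contained; if $|c'-t|\ge 3$ one restarts the whole argument on the strictly shorter interval $[\min\{t,c'\},\max\{t,c'\}]$, which terminates. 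Your write-up needs this iteration, and it is also where the boundary check you flagged (``$i=a$ or $j=b$'') is actually used: one must pick the base vertex $v_t$ that is not $v_a$ or $v_b$ so that the degree-restricted vertex of $G_1$ (or the central solid vertex of $G_2$) lies in $\mathcal{V}(v_a,v_b)$.
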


\begin{proof}
If $\mathcal{C}[v_a,v_b]\setminus \{v_av_b\}=\emptyset$ (note that the chord $v_av_b$ may not really exist), then $d(v_{a+1})=d(v_{a+2})=2$ and $G_1$ is properly contained. If
there is at least one chord in $\mathcal{C}[v_a,v_b]\setminus \{v_av_b\}$, then choose one, say $v_{a'}v_{b'}$ with $a\leq a'<b'\leq b$, so that there is no other chord in $\mathcal{C}[v_{a'},v_{b'}]$.
If $b'-a'\geq 3$, then $d(v_{a'+1})=d(v_{a'+2})=2$ and $G_1$ is properly contained.
If $b'-a'=2$, then $d(v_{a'+1})=2$. Choose $t\in \{a',b'\}$ such that $v_t\neq v_a,v_b$.
If $d(v_{t})\leq 3$, then $G_1$ is properly contained.
If $d(v_{t})=4$, then there is another one chord $v_{t}v_{c'}$ with $a\leq c'\leq b$ and $c'\neq a', b'$. If $|c'-t|=2$, then $d(v_{t-1})=d(v_{t+1})=2$, and thus $G_2$ is properly contained.
If $|c'-t|\geq 3$, then let $a:=\min\{c',t\}$, $b:=\max\{c',t\}$ and come back to the first line of this proof.
Since $t\neq a,b$, $|c'-t|<|b-a|$, which implies that this iterative process will terminate.
\end{proof}

\begin{lem}\label{pc}
Let $v_iv_j$ cross $v_kv_l$ in $G$ with $i<k<j<l$ so that there are no other crossed chords besides $v_iv_j$ and $v_kv_l$ in $\mathcal{C}[v_i,v_l]$. If $\Delta(G)\leq 4$ and $\max\{|\mathcal{V}[v_i,v_k]|,|\mathcal{V}[v_k,v_j]|,|\mathcal{V}[v_j,v_l]|\}\geq 4$, then $G[v_i,v_l]$ properly contains $G_1$ or $G_2$.
\end{lem}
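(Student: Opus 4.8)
The plan is to reduce the claim to Lemma~\ref{noncrossed} by isolating, inside one of the three ``sides'' $\mathcal{V}[v_i,v_k]$, $\mathcal{V}[v_k,v_j]$, $\mathcal{V}[v_j,v_l]$, a long chord-free path together with a boundary vertex on each end that is not forced to be solid or degree-restricted. Concretely, suppose without loss of generality that $|\mathcal{V}[v_k,v_j]|\geq 4$ (the other two cases are symmetric: $v_iv_j$ and $v_kv_l$ play interchangeable roles after relabelling, and the side $\mathcal{V}[v_i,v_k]$ versus $\mathcal{V}[v_j,v_l]$ is handled by the mirror symmetry of the drawing). First I would argue that every chord lying in $\mathcal{C}[v_k,v_j]$ is an honest (uncrossed) chord: by hypothesis the only crossed chords in $\mathcal{C}[v_i,v_l]$ are $v_iv_j$ and $v_kv_l$ themselves, and neither of these has both endpoints in $\mathcal{V}[v_k,v_j]$, so no chord of $\mathcal{C}[v_k,v_j]$ is crossed. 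Next I would check that there are no edges between $\mathcal{V}(v_k,v_j)$ and the rest of the boundary $\mathcal{V}(v_j,v_k)$: such an edge would be a chord crossed by $v_iv_j$ or by $v_kv_l$ (since $v_iv_j$ and $v_kv_l$ together ``separate'' the interior of the arc $(v_k,v_j)$ from the exterior in the drawing), contradicting the maximality assumption on crossed chords in $\mathcal{C}[v_i,v_l]$. Hence $\mathcal{V}[v_k,v_j]$ satisfies the hypotheses of Lemma~\ref{path}, and since $v_kv_j$ is not an edge (it would be crossed by $v_iv_j$, impossible as $v_iv_j$ is already crossed only once, by $v_kv_l$), we get that $\mathcal{V}[v_k,v_j]$ is a path; as $j-k\geq 3$, Lemma~\ref{noncrossed} applies to $G[v_k,v_j]$ and yields a properly contained copy of $G_1$ or $G_2$.

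The only subtlety is the word \emph{properly}: the copy of $G_1$ or $G_2$ obtained from Lemma~\ref{noncrossed} is proper with respect to the endpoints $v_k,v_j$ of the sub-interval, but for Lemma~\ref{pc} we need it proper with respect to $v_i$ and $v_l$. Since $v_i$ and $v_l$ do not lie in $\mathcal{V}[v_k,v_j]$ at all (we have $i<k<j<l$, so the interval $\mathcal{V}[v_k,v_j]$ is disjoint from $\{v_i,v_l\}$), every vertex of the found configuration lies strictly between $v_k$ and $v_j$ on the boundary, hence strictly between $v_i$ and $v_l$, and in particular is distinct from both $v_i$ and $v_l$. Therefore the configuration is automatically proper for the pair $(v_i,v_l)$ as well, and the conclusion of the lemma follows.

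For the side $\mathcal{V}[v_j,v_l]$ (and symmetrically $\mathcal{V}[v_i,v_k]$): here I would run the same argument on the interval $\mathcal{V}[v_j,v_l]$, noting that its chords are again uncrossed (the crossed chord $v_kv_l$ has only one endpoint, $v_l$, in this interval, and $v_iv_j$ only the endpoint $v_j$), that there are no edges from $\mathcal{V}(v_j,v_l)$ to the complementary arc (such an edge would cross $v_kv_l$), and that $v_jv_l$ is not an edge of $G$ (it would be crossed by $v_kv_l$, which already has its one crossing with $v_iv_j$). Then $\mathcal{V}[v_j,v_l]$ is a path by Lemma~\ref{path}, it has at least $4$ vertices, so Lemma~\ref{noncrossed} gives $G_1$ or $G_2$ inside $G[v_j,v_l]$, proper for $v_j,v_l$ and hence — since $v_i$ lies outside $\mathcal{V}[v_j,v_l]$ — proper for $v_i,v_l$ too. (One must be slightly careful that the configuration does not touch $v_j$ or $v_l$; but properness for $v_j,v_l$ from Lemma~\ref{noncrossed} already guarantees this, and that is exactly why the interval-endpoint version of properness was built into Lemma~\ref{noncrossed}.)

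The main obstacle I anticipate is not any single hard step but the bookkeeping of the symmetry reductions together with a careful justification of the claim ``any edge leaving the open arc must cross $v_iv_j$ or $v_kv_l$'': this uses that in an outer-1-plane drawing the two crossing chords $v_iv_j$, $v_kv_l$ carve the outer disk into regions, and a chord with exactly one endpoint in an open sub-arc is forced to cross one of them; making this topological observation precise (rather than hand-wavy) is the delicate point, and it is where I would spend most of the write-up.
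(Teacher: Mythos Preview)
Your approach is essentially the paper's: isolate one of the three arcs, verify the hypotheses of Lemmas~\ref{path} and~\ref{noncrossed} there, and then carry the properly contained $G_1$ or $G_2$ up to $G[v_i,v_l]$ by noting that the constrained vertices lie in the open arc and therefore keep their degrees in the larger graph. The paper simply takes a \emph{side} arc $\mathcal{V}[v_i,v_k]$ as the representative case, which makes the ``no edges leave the open arc'' step a one-liner from outer-1-planarity alone (any such edge forces a second crossing on $v_iv_j$ or on $v_kv_l$), whereas for your middle arc the sub-case $u\notin\mathcal{V}[v_i,v_l]$ is not covered by the no-other-crossed-chords hypothesis and needs the additional observation that such an edge would cross \emph{both} $v_iv_j$ and $v_kv_l$.

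One minor slip worth fixing: your claims that $v_kv_j\notin E(G)$ and $v_jv_l\notin E(G)$ are incorrectly justified (in each case the alleged crossing pair shares an endpoint, so there is no crossing) and in any event unnecessary --- the ``non-edge'' alternative in Lemma~\ref{path} requires consecutive boundary vertices and is already excluded by $j-k\geq 3$ (respectively $l-j\geq 3$).
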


\begin{proof}
Without loss of generality, assume that $\mathcal{V}[v_i,v_k]\geq 4$. This implies that $k-i\geq 3$.
Note that there are no edges between $\mathcal{V}(v_i,v_k)$ and $\mathcal{V}(v_k,v_i)$, since $G$ is outer-1-planar.
By Lemma \ref{path}, $\mathcal{V}[v_i,v_k]$ is a path. By Lemma \ref{noncrossed}, $G[v_i,v_k]$ properly contains $G_1$ or $G_2$.
Since for any vertex in $\mathcal{V}(v_i,v_k)$, its degree in $G[v_i,v_k]$ is the same as that in $G[v_i,v_l]$. Hence $G[v_i,v_l]$ properly contained $G_1$ or $G_2$.
\end{proof}

\begin{figure}
  \begin{center}
  \includegraphics[height=10.5cm,width=15cm]{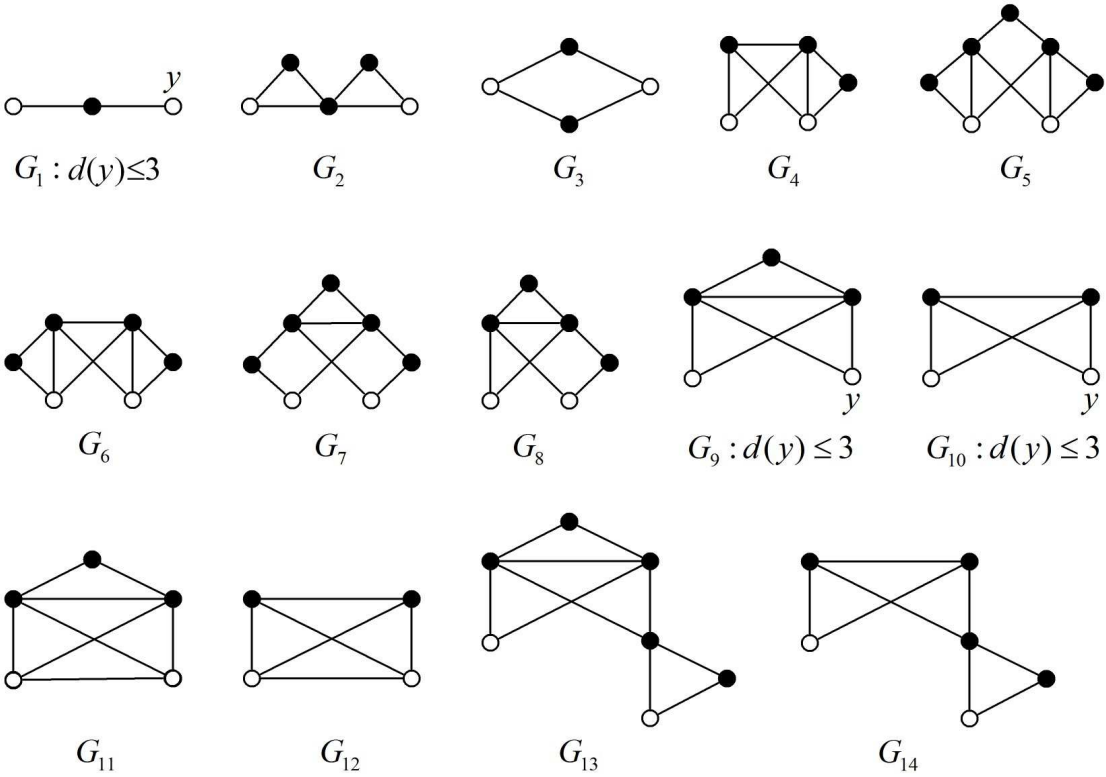}\\
  \end{center}
  \caption{Local structures in outer-1-planar graph with $\vartheta(G)\geq 1$ and $\Delta(G)\leq 4$}\label{str}
\end{figure}

\section{Local structures}

\begin{figure}
  \begin{center}
  \includegraphics[width=12cm,height=5cm]{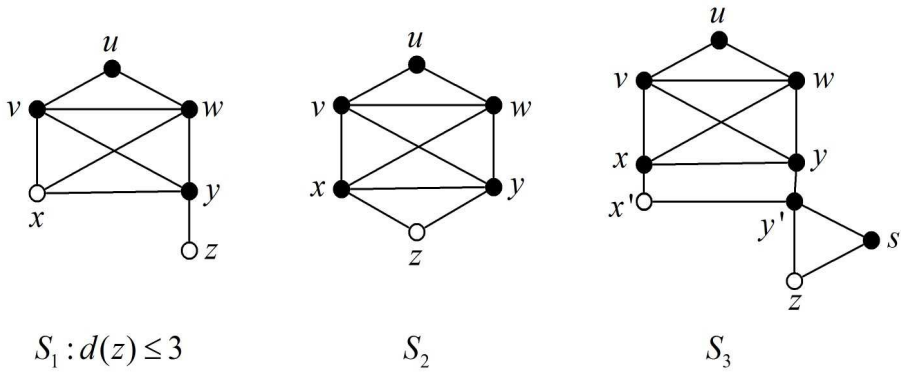}\\
  \end{center}
  \caption{Special configurations}\label{s1s2s3}
\end{figure}

\begin{lem}\label{mainlemma}
Let $G$ be a good $2$-connected outer-1-plane graph with vertices $v_1,v_2,\ldots,v_n$ lying clockwise on its outer boundary, where $n=|G|$. If $\Delta(G)\leq 4$ and $\vartheta(G)\geq 1$, then $G[v_1,v_n]$ properly contains one of the configurations (see Figures \ref{str} and \ref{s1s2s3})\\
\indent (1) $G_1$, $G_3$ if $n=4$, unless $\mathcal{V}[v_1,v_4]$ is a path and $v_1v_3,v_2v_4\in E(G)$; \\
\indent (2) $G_1,\ldots,G_4,G_{10}$, $G_{12}$ if $n=5$, unless $\mathcal{V}[v_1,v_5]$ is a path and $v_1v_4,v_2v_4,v_2v_5\in E(G)$;\\
\indent (3) $G_1,\ldots,G_4,G_6,G_8,\ldots,G_{12},G_{14}$ if $n=6$;\\
\indent (4) $G_1,\ldots,G_{14}$  if $n\geq 7$;\\
\indent (5) $G_1,\ldots,G_{10},G_{12},G_{13},G_{14}$, $S_1$, $S_3$ if $n\geq 8$ and $\vartheta(G)\geq 3$.
\end{lem}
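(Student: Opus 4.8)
The plan is to argue by a careful case analysis on the combinatorial structure of the boundary of $G[v_1,v_n]$, organized around how many crossings appear in the drawing and where the chords lie. The overall strategy mirrors the proofs of Lemmas \ref{noncrossed} and \ref{pc}: we want to locate a short ``reducible'' segment on the boundary whose internal vertices have small degree, and then show it must coincide with one of the listed configurations. Because the statement is stratified by $n$, I would first establish the cases $n=4,5,6$ and $n\ge 7$ (parts (1)--(4)) — these are presumably proved before part (5), and I would treat part (5) as the genuinely new content, using (1)--(4) as a black box: when $\vartheta(G)\ge 3$ we already know $G[v_1,v_n]$ properly contains one of $G_1,\dots,G_{14}$, so the only task is to \emph{upgrade} this by showing that the configurations $G_{11}$ (and possibly the need for certain others) can always be replaced, under the stronger hypothesis $\vartheta(G)\ge 3$ and $n\ge 8$, by one of $G_1,\dots,G_{10},G_{12},G_{13},G_{14},S_1,S_3$.

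Concretely, the first step is to split on whether $G[v_1,v_n]$ contains a crossing at all. If it contains no crossed chords, then by Lemma \ref{path} the relevant boundary segments are paths, and Lemma \ref{noncrossed} (applicable since $n\ge 8$ gives long paths) already yields $G_1$ or $G_2$; so the no-crossing subcase is immediate. If there is exactly one crossing in $G[v_1,v_n]$, say $v_iv_j$ crossing $v_kv_l$ with $i<k<j<l$ and no other crossed chords inside $\mathcal{C}[v_i,v_l]$, then Lemma \ref{pc} handles the case where one of the three segments $\mathcal{V}[v_i,v_k],\mathcal{V}[v_k,v_j],\mathcal{V}[v_j,v_l]$ has size at least $4$; the remaining possibility is that all three are short, and then — using $n\ge 8$ and the fact that $\vartheta(G)=\infty$ is compatible with a single crossing — the part of the boundary \emph{outside} the ``crossing block'' $\mathcal{V}[v_i,v_l]$ is long, so by Lemma \ref{path}/\ref{noncrossed} applied to $\mathcal{V}[v_l,v_i]$ we again extract $G_1$ or $G_2$, possibly after noting the induced-subgraph degree-preservation argument used at the end of Lemma \ref{pc}. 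The main case is therefore: $G[v_1,v_n]$ contains at least two crossings. Here the hypothesis $\vartheta(G)\ge 3$ forces the crossings to be ``spread out'' — any two crossings have all four-vs-four endpoint pairs at boundary-distance $\ge 3$ — which means between two consecutive crossing blocks along the boundary there is always a long chord-free path. I would isolate a pair of consecutive crossings (in the cyclic order of crossing blocks), look at the boundary arc strictly between them, and use the absence of edges leaving that arc together with Lemma \ref{path} to conclude it is a path of length $\ge 3$; Lemma \ref{noncrossed} then delivers $G_1$ or $G_2$ inside $G[v_1,v_n]$, or — if the path is too short, which can only happen when the crossing distance is exactly $3$ and the endpoints are ``tight'' — the precise local picture around one crossing block is forced, and it is exactly $S_1$ or $S_3$, or a copy of $G_{12},G_{13},G_{14}$.

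The step I expect to be the main obstacle is exactly this last dichotomy: pinning down, under $\vartheta(G)\ge 3$, that when none of the long-path arguments apply the local configuration around a crossing is \emph{forced} to be one of $S_1,S_3$ (or one of the permitted $G_i$), i.e.\ showing that $G_{11}$ — the configuration we are trying to eliminate — is incompatible with crossing distance $3$ and $n\ge 8$. This requires enumerating, for a crossing $v_iv_j\times v_kv_l$, the possible degrees and incident chords of the four endpoints and their immediate boundary neighbors, using repeatedly that (a) any chord with an endpoint in $\{v_i,v_k,v_j,v_l\}$ cannot reach near another crossing (by $\vartheta(G)\ge 3$), (b) $\Delta(G)\le 4$ caps the number of such chords, and (c) $2$-connectedness forbids certain degree-$1$ situations. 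The bookkeeping is delicate because the ``solid vs.\ hollow vertex'' and ``properly contains'' conventions mean we must be careful that the matched subgraph avoids putting a degree constraint on $v_1$ or $v_n$; I would handle this by always choosing the crossing block to be one that is separated from both $v_1$ and $v_n$ by nonempty boundary arcs, which is possible when $n\ge 8$ and there are at least two crossings. Once the forced local pictures are enumerated and checked against the list, part (5) follows.
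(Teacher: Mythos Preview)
Your high-level reading of part (5) is right and matches the paper: once (4) is in hand, the only obstruction is a properly contained copy of $G_{11}$, and the job is to show that under $\vartheta(G)\ge 3$ and $n\ge 8$ some configuration from the shorter list must also appear. But your concrete plan then abandons this and restarts with a global case split on the number of crossings; that is inconsistent with the ``use (4) as a black box'' framing, and more importantly it loses the one piece of structure you actually need --- the location of the $G_{11}$ copy.

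The paper's proof of (5) does not argue about ``consecutive crossings'' at all. It assumes $G[v_i,v_l]\cong G_{11}$ (so $l-i=4$), uses $2$-connectedness and $\vartheta(G)\ge 3$ to force the boundary edges $v_{i-1}v_i$ and $v_lv_{l+1}$, and then studies the degree of $v_{l+1}$ (by symmetry). If $d(v_{l+1})\le 3$ one gets $S_1$ directly; if $d(v_{l+1})=4$ there is a non-crossed chord $v_{l+1}v_s$, and the argument branches on $s-l$. For small $s-l$ one invokes parts (1)--(4) on the block $\hat G_{l+1,s}$ (the exceptional cases there are all ruled out by $\vartheta(G)\ge 3$), producing one of the listed $G_i$ or the configuration $S_3$; for large $s-l$ one applies the \emph{induction hypothesis on $n$} to the strictly smaller $2$-connected outer-1-plane graph $\hat G_{i-1,s}$ (or $\hat G_{t,l+1}$). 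This induction on $n$ is the engine of the proof, and it is also what the paper uses to establish (4) itself.

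Your plan has no inductive mechanism, and without one there is a genuine gap. In your single-crossing case you propose to apply Lemma~\ref{noncrossed} to the outside arc $\mathcal V[v_l,v_i]$; but that arc contains $v_1$ and $v_n$ as interior vertices, so a $G_1$ or $G_2$ found there may place a solid vertex on $v_1$ or $v_n$, violating ``properly contains in $G[v_1,v_n]$''. In the $\ge 2$-crossings case the same problem arises: choosing a crossing block away from $v_1,v_n$ does not control where the configuration produced by Lemma~\ref{noncrossed} (applied to a nearby arc) actually lands. The paper avoids this entirely by always passing to a subgraph $\hat G_{a,b}$ with $1\le a<b\le n$ and $b-a<n$, so that the ``forbidden'' endpoints of the smaller problem are interior vertices of the original one, and by treating the residual case $s=n$ separately (finding a second crossing inside $[v_l,v_n]$ and restarting with $i\neq 1$, or falling back to Lemma~\ref{noncrossed} on $[v_l,v_n]$ where $v_n$ is now an endpoint and hence automatically avoided). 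Your proposal would need an analogous device.

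Finally, parts (1)--(4) are not short: the paper spends most of the proof on them (direct enumeration for $n=4,5$; a three-subcase analysis on $|\mathcal V[v_i,v_l]|$ for $n=6$; and an induction on $n$ with a detailed base case $n=7$ for (4)). Saying you would ``first establish'' them is not a plan.
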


\begin{proof}
If there is no crossing in $G$, then $v_1v_2\ldots v_n$ forms a path since $G$ is 2-connected. Under this condition, one can easily show, by Lemma \ref{noncrossed}, that $G[v_1,v_n]$ properly contains $G_1$ if $n=4$, and $G_1$ or $G_2$ if $n\geq 5$. Hence in the following we always assume that there is at least one crossing in $G$.

\textbf{Case 1. $n=4$.}

Suppose that $v_1v_3$ crosses $v_2v_4$. Since $G$ is good and 2-connected, at least two of $v_1v_2,v_2v_3$ and $v_3v_4$ belong to $E(G)$.
If $v_2v_3\not\in E(G)$, then $v_1v_2,v_3v_4\in E(G)$ and $G_3$ is properly contained. If $v_2v_3\in E(G)$ and $\{v_1v_2,v_3v_4\}\not\subseteq E(G)$, then $G_1$ is properly contained
If $v_1v_2,v_2v_3,v_3v_4\in E(G)$, then $\mathcal{V}[v_1,v_4]$ is a path.

\textbf{Case 2. $n=5$.}

By symmetry, we consider three cases. First, if $v_1v_3$ crosses $v_2v_4$, then at least two of $v_1v_2,v_2v_3$ and $v_3v_4$ belong to $E(G)$, since $G$ is good and 2-connected.
If $v_2v_3\not\in E(G)$, then $v_1v_2,v_3v_4\in E(G)$ and $G_3$ is properly contained. If $v_2v_3\in E(G)$ and $\{v_1v_2,v_3v_4\}\not\subseteq E(G)$, then $G_1$ is properly contained
If $v_1v_2,v_2v_3,v_3v_4\in E(G)$, then $G_{12}$ is properly contained if $v_1v_4\in E(G)$, and $G_{10}$ is contained if $v_1v_4\not\in E(G)$.

Second, if $v_1v_3$ crosses $v_2v_5$, then $v_3v_4,v_4v_5\in E(G)$ and $d(v_4)=2$, since $G$ is 2-connected. If $\{v_2v_3,v_3v_5\}\not\subseteq E(G)$, then $d(v_3)\leq 3$ and $G_1$ is properly contained. If $\{v_2v_3,v_3v_5\}\subseteq E(G)$, then $G[v_1,v_n]$ properly contains $G_3$ if $v_1v_2\not\in E(G)$, or $G_4$ if $v_1v_2\in E(G)$.

Third, if $v_1v_4$ crosses $v_2v_5$, then $v_2v_3,v_3v_4\in E(G)$ and $d(v_3)=2$, since $G$ is 2-connected. If $\{v_1v_2,v_2v_4,v_4v_5\}\not\subseteq E(G)$, then $\min\{d(v_2),d(v_4)\}\leq 3$ and $G_1$ is properly contained. If $\{v_1v_2,v_2v_4,v_4v_5\}\subseteq E(G)$, then the excluded case occurs.

\textbf{Case 3. $n=6$.}

Suppose that $v_iv_j$ crosses $v_kv_l$ with $1\leq i<k<j<l \leq 6$. In what follows, we consider three major cases. Note that there is no edge between $\mathcal{V}(v_i,v_l)$ and $\mathcal{V}(v_l,v_i)$, since $G$ is an outer-1-plane graph. So, the graph $\hat{G}_{i,l}$ is a good 2-connected outer-1-plane graph, and thus the results (1) and (2) can be applied to $\hat{G}_{i,l}$.

\textbf{Subcase 3.1. $|\mathcal{V}[v_i,v_l]|=4$.}

By (1), $\mathcal{V}[v_i,v_l]$ is a path, because otherwise $\hat{G}_{i,l}[v_i,v_l]$ properly contains $G_1$ or $G_3$
, and so does $G[v_1,v_6]$.

If $v_iv_l\in E(G)$, then $G_{12}$ is properly contained. Therefore, we consider the case that $v_iv_l\not\in E(G)$. By symmetry, we discuss the following two subcases.

If $i=1$, then $v_4v_5,v_4v_6\in E(G)$, because otherwise $d(v_4)\leq 3$ and $G_{10}$ is properly contained. This implies that $v_5v_6\in E(G)$ since $G$ is 2-connected. Therefore, $G_{14}$ is properly contained.

If $i=2$, then $d(v_2)\leq 3$ or $d(v_5)\leq 3$, because otherwise $v_2v_6$ crosses $v_1v_5$, contradicting the fact that $\vartheta(G)\geq 1$. Therefore, $G_{10}$ is properly contained.

\textbf{Subcase 3.2. $|\mathcal{V}[v_i,v_l]|=5$.}

Assume, without loss of generality, that $i=1$. By (2), $\mathcal{V}[v_1,v_5]$ is a path and $v_1v_4,v_2v_4,v_2v_5\in E(G)$, because otherwise $\hat{G}_{1,5}[v_1,v_5]$ (and thus $G[v_1,v_5]$) properly contains one configurations from the list in (2). If $v_1v_5\in E(G)$, then $G_{11}$ is properly contained. If $v_1v_5\not\in E(G)$, then $d(v_5)\leq 3$ and $G_9$ is properly contained.

\textbf{Subcase 3.3. $|\mathcal{V}[v_i,v_l]|=6$.}

If $\max\{|\mathcal{V}[v_i,v_k]|,|\mathcal{V}[v_k,v_j]|,|\mathcal{V}[v_j,v_l]|\}\geq 4$, then by the fact that $\vartheta(G)\geq 1$ and by Lemma \ref{pc}, $G[v_i,v_l]$ properly contains $G_1$ or $G_2$, and so does $G[v_1,v_n]$.

Hence we assume that $\max\{|\mathcal{V}[v_i,v_k]|,|\mathcal{V}[v_k,v_j]|,|\mathcal{V}[v_j,v_l]|\}\leq 3$. By symmetry we consider two subcases.

First, if $v_1v_4$ crosses $v_2v_6$, then by the 2-connectedness of $G$, we have $v_2v_3,v_3v_4,v_4v_5,v_5v_6\in E(G)$. This implies that $d(v_3)=2$. If $\{v_1v_2,v_2v_4\}\not\subseteq E(G)$, then $d(v_2)\leq 3$ and $G_1$ is properly contained. If $\{v_1v_2,v_2v_4\}\subseteq E(G)$, then $v_4v_6\not\in E(G)$ since $\Delta(G)\leq 4$, which implies that $G_8$ is properly contained.

Second, if $v_1v_4$ crosses $v_3v_6$, then by the the 2-connectedness of $G$, we have $v_1v_2,v_2v_3,v_4v_5,v_5v_6\in E(G)$ and $d(v_2)=d(v_5)=2$. If $\{v_1v_3,v_3v_4,v_4v_6\}\not\subseteq E(G)$, then $v_3$ or $v_4$ has degree at most 3 and $G_1$ is properly contained. If $\{v_1v_3,v_3v_4,v_4v_6\}\subseteq E(G)$, then $G_6$ is properly contained.

\textbf{Case 4. $n\geq 7$.}

We prove (4) by induction on $n$. First, we prove it for $n=7$ in Case 4.1, and then assume that the result holds for good $2$-connected outer-1-plane graphs $G$ with order $n'$, where $7\leq n'<n$. In Case 4.2, we prove (4) for $n\geq 8$, where the above induction hypothesis will be frequently applied.

\textbf{Case 4.1.} $n=7$.

Suppose that $v_iv_j$ crosses $v_kv_l$ with $1\leq i<k<j<l \leq 7$. Three major cases are considered as follows. Again, note that there is no edge between $\mathcal{V}(v_i,v_l)$ and $\mathcal{V}(v_l,v_i)$, since $G$ is an outer-1-plane graph. So, the graph $\hat{G}_{i,l}$ is a good 2-connected outer-1-plane graph, and thus the results (1), (2) and (3) can be applied to $\hat{G}_{i,l}$.

\textbf{Subcase 4.1.1. $|\mathcal{V}[v_i,v_l]|=4$.}

By (1), we assume that $\mathcal{V}[v_i,v_l]$ is a path and $v_iv_j,v_kv_l\in E(G)$ (the reason for this is the same as the one we stated in Subcase 3.1. Here and below, we do not repeatedly explain why and how the previous results be applied). If $v_iv_l\in E(G)$, then $G_{12}$ is properly contained. Therefore, we consider the case that $v_iv_l\not\in E(G)$.
By symmetry, the following two subcases are considered.

If $i=1$, then $d(v_4)=4$ (otherwise $G_{10}$ is properly contained), which implies that $v_4v_6\in E(G)$ or $v_4v_7\in E(G)$. If $v_4v_6\in E(G)$, then by the 2-connectedness of $G$, $v_4v_5,v_5v_6\in E(G)$ and $d(v_5)=2$, which implies that $G_{14}$ is properly contained. If $v_4v_6\not\in E(G)$ and $v_4v_7\in E(G)$, then $v_4v_5,v_5v_6,v_6v_7\in E(G)$ since $G$ is 2-connected, which implies that $d(v_6)=2$ and $d(v_5)\leq 3$. Hence $G_1$ is properly contained in $G[v_1,v_7]$.

If $i=2$, then $d(v_2)=d(v_5)=4$, because otherwise $G_{10}$ is properly contained. This case appears only if $v_1v_2,v_2v_7,v_5v_6,v_6v_7,v_5v_7\in E(G)$ since $\vartheta(G)\geq 1$, and thus $G_{14}$ is properly contained.

\textbf{Subcase 4.1.2. $|\mathcal{V}[v_i,v_l]|=5$.}

By symmetry, we consider two subcases.

First, if $i=1$, then by (2), we assume that $\mathcal{V}[v_1,v_5]$ is a path and $v_1v_4,v_2v_4,v_2v_5\in E(G)$. If $v_1v_5\in E(G)$, then $G_{11}$ is properly contained. Therefore we assume that $v_1v_5\not\in E(G)$. If $d(v_5)\leq 3$, then $G_9$ is properly contained.  If $d(v_5)=4$, then $v_5v_6,v_5v_7\in E(G)$, and by the 2-connectedness of $G$, we also have $v_6v_7\in E(G)$ and $d(v_6)=2$. This implies that $G_{13}$ is properly contained.

Second, if $i=2$, then by (2), we assume that $\mathcal{V}[v_2,v_6]$ is a path and $v_2v_5,v_3v_5,v_3v_6\in E(G)$. If $v_2v_6\in E(G)$, then $G_{11}$ is properly contained. If $v_2v_6\not\in E(G)$, then $d(v_2)\leq 3$ or $d(v_7)\leq 3$, because otherwise $v_1v_6$ crosses $v_2v_7$, contradicting the fact that $\vartheta(G)\geq 1$. Therefore, $G_9$ is properly contained.

\textbf{Subcase 4.1.3. $|\mathcal{V}[v_i,v_l]|=6$.}

By (3), $G[v_i,v_l]$ properly contains $G_1,\ldots,G_4,G_6,G_8,\ldots,G_{12},G_{14}$, and so does $G[v_1,v_7]$.

\textbf{Subcase 4.1.4. $|\mathcal{V}[v_i,v_l]|=7$.}

If $\max\{|\mathcal{V}[v_i,v_k]|,|\mathcal{V}[v_k,v_j]|,|\mathcal{V}[v_j,v_l]|\}\geq 4$, then by Lemma \ref{pc}, $G[v_i,v_l]$ properly contains $G_1$ or $G_2$, and so does $G[v_1,v_n]$. Here, note that there is no other crossed chords besides $v_iv_j$ and $v_kv_l$ in $\mathcal{C}[v_i,v_l]$, since $\vartheta(G)\geq 1$.

Hence we leave an unique case, that is, the case when $v_1v_5$ crosses $v_3v_7$. Since $G$ is 2-connected, $\mathcal{V}[v_1,v_7]$ is a path and $d(v_2)=d(v_4)=d(v_6)=2$. If $\{v_1v_3,v_5v_7\}\not\subseteq E(G)$, then $G[v_1,v_7]$ properly contains $G_1$ if $v_3v_5\not \in E(G)$, and $G_7$ otherwise. If $\{v_1v_3,v_5v_7\}\subseteq E(G)$, then $v_3v_5\not\in E(G)$ since $\Delta(G)\leq 4$, and thus $G_5$ is properly contained.

\textbf{Case 4.2.} $n\geq 8$.

Choose two mutually crossed chords $v_iv_j$ and $v_kv_l$ with $1\leq i<k<j<l \leq n$ so that $l-i$ is as minimum as possible. Clearly, there is no other crossed chord besides $v_iv_j$ and $v_kv_l$ in $\mathcal{C}[v_i,v_l]$ by this choice.

If $\max\{|\mathcal{V}[v_i,v_k]|,|\mathcal{V}[v_k,v_j]|,|\mathcal{V}[v_j,v_l]|\}\geq 4$, then by Lemma \ref{pc}, $G[v_i,v_l]$ properly contains $G_1$ or $G_2$, and so does $G[v_1,v_n]$. Hence we assume that $\max\{|\mathcal{V}[v_i,v_k]|,|\mathcal{V}[v_k,v_j]|,|\mathcal{V}[v_j,v_l]|\}\leq 3$ (i.e., $\max\{k-i,j-k,l-j\}\leq 2$). This implies that
$|\mathcal{V}[v_i,v_l]|\leq 7$.
If $|\mathcal{V}[v_i,v_l]|=7$ (resp.\,$|\mathcal{V}[v_i,v_l]|=6$),
then by Case 4.1 (resp.\,by (3)), $\hat{G}_{i,l}[v_i,v_l]$ properly contains one of the required configurations, and so does $G[v_1,v_n]$.

If $|\mathcal{V}[v_i,v_l]|=5$ (resp.\,$|\mathcal{V}[v_i,v_l]|=4$), then by (2) (resp.\,by (1)), we only consider the case that $\mathcal{V}[v_i,v_l]$ is a path with $j-k=2$ and $v_kv_j\in E(G)$ (resp.\,with $j-k=1$). If $v_iv_l\in E(G)$, then $G_{11}$ (resp.\,$G_{12}$) is properly contained. Therefore, we assume that
$v_iv_l\not\in E(G)$.

Since $i\neq 1$ or $l\neq n$, we assume, by symmetry, that $l\neq n$. If $d(v_l)\leq 3$, then $G_9$ (resp.\,$G_{10}$) is properly contained. If $d(v_l)=4$, then by the fact that $\vartheta(G)\geq 1$, there is a non-crossed chord $v_lv_s$ with $1\leq s\leq n$ and $s\neq i,k$.

\textbf{Subcase 4.2.1. $i\neq 1$.}

It is easy to see that $l<s\leq n$ or $1\leq s<i$. If $1\leq s<i$, then $d(v_i)=4$, because otherwise $G_9$ (resp.\,$G_{10}$) is properly contained. This implies that there is a non-crossed chord $v_iv_t$ with $s\leq t<i$, since $\vartheta(G)\geq 1$.
Therefore, we shall then consider two major cases: (a) there is a non-crossed chord $v_lv_s$ with $l<s\leq n$, or (b) there is non-crossed chord $v_iv_t$ with $s\leq t<i$. Clearly, this two cases are symmetry. Hence we just need consider one, say (a).

Choose one $s$ from those satisfying the condition (a) so that $s-l$ is as large as possible.
At this moment, there is no chord in the form $v_lv_t$ with $1\leq t<i$, because otherwise $v_s$ would be a cut vertex separating $\mathcal{V}(v_t,v_s)$ from $\mathcal{V}(v_s,v_t)$. Hence by the choice of $s$ and the fact that $v_lv_s$ is non-crossed, there is no edge between $\mathcal{V}(v_i,v_s)$ and $\mathcal{V}(v_s,v_i)$.

If $s-l=2$, then by the 2-connectedness of $G$, $v_lv_{l+1},v_{l+1}v_s\in E(G)$ and $d(v_{l+1})=2$, which implies that $G_{13}$ (resp.\,$G_{14}$) is properly contained. If $s-l\geq 3$, then $\hat{G}_{i,s}$ is a good 2-connected outer-1-plane graph with order $n'$, where $7\leq n'=s-i+1<s\leq n$ (note that $l-i\geq 3$). By the induction hypothesis, $\hat{G}_{i,s}[v_i,v_s]$ properly contains one of the configurations among $G_1,\ldots,G_{14}$. Since there is no edge between $\mathcal{V}(v_i,v_s)$ and $\mathcal{V}(v_s,v_i)$, any configuration properly contained in $\hat{G}_{i,s}[v_i,v_s]$ is properly contained in $G[v_i,v_s]$, and then in $G[v_1,v_n]$.

\textbf{Subcase 4.2.2. $i=1$.}

In this case we have $l<s\leq n$. Choose such an $s$ so that  $s-l$ is as large as possible. If  $s-l=2$, then by the 2-connectedness of $G$, $v_lv_{l+1},v_{l+1}v_s\in E(G)$ and $d(v_{l+1})=2$, which implies that $G_{13}$ (resp.\,$G_{14}$) is properly contained. If $s-l\geq 3$ and $s\neq n$, then applying the induction hypothesis to the graph $\hat{G}_{i,s}$ and we can obtain the required result as we have done in Subcase 4.2.1. Note that there is no edge between $\mathcal{V}(v_i,v_s)$ and $\mathcal{V}(v_s,v_i)$ by the choice of $s$.
At last, we are left the case that $s-l\geq 3$ and $s=n$.

If there is a pair of crossed chords $v_{i'}v_{j'}$ and $v_{k'}v_{l'}$ with $l\leq i'<k'<j'<l'\leq s$, then before augmenting, we can properly choose such a pair so that $l'-i'$ is as minimum as possible. Now, we come back to the first line of Case 4.2 by setting $i:=i',j:=j',k:=k'$ and $l:=l'$. Note that $i'\neq 1$, and thus the new $i$ is not 1. Therefore, this second round of arguments will not involve the current subcase and thus one of the required configurations can be properly contained in $G[v_1,v_n]$.

Hence we assume that there are no crossed chords in $\mathcal{C}[v_l,v_s]$. Since $v_lv_s$ is not crossed, there are no edges between $\mathcal{V}(v_l,v_s)$ and $\mathcal{V}(v_s,v_l)$. Since $s-l\geq 3$, $\mathcal{V}[v_i,v_l]$ is a path by Lemma \ref{path}. So, by Lemma \ref{noncrossed}, $G[v_l,v_s]$ properly contains $G_1$ or $G_2$, and so does $G[v_1,v_n]$.

\textbf{Case 5. $n\geq 8$ and $\vartheta(G)\geq 3$.}


We prove (5) for by induction on $n$. First, we prove it for $n=8$ in Case 5.1, and then assume that the result holds for good $2$-connected outer-1-plane graphs $G$ with order $n'$, where $8\leq n'<n$. In Case 5.2, we prove (5) for $n\geq 9$, where the above induction hypothesis will be frequently applied.

\textbf{Case 5.1.} $n=8$.

We assume that $G[v_1,v_n]$ properly contains $G_{11}$, as otherwise the result follows from (4). Let $G[v_i,v_l]\cong G_{11}$, where $i\leq i<l\leq n$. Since $G$ is 2-connected, $v_i,v_l$ both have degree 4 (otherwise one of them is a cut-vertex of $G$), and there exist chords $v_lv_s$ and $v_tv_i$ such that $s\neq i$ and $t\neq l$. If $s<i$, then $v_s$ becomes a cut-vertex of $G$ unless $s=1$ and $l=n$, since $v_lv_s$ is non-crossed by the face that $\vartheta(G)\geq 1$. Hence $l<s\leq n$ if $l\neq n$. Similar, $1\leq t<i$ if $i\neq 1$. Since $l-i=4$, either $i\neq 1$ or $l\neq n$. Without loss of generality, we assume the latter, and thus $l<s\leq n$. If $s-l\geq 2$, then $v_s$ is a cut-vertex separating $v_l$ from $v_{l+1}$, contradicting the 2-connectedness of $G$. Hence $s=l+1$.

If $v_iv_{s}\in E(G)$, then $v_{s}$ is a cut-vertex separating $\mathcal{V}[v_i,v_l]$ from $V(G)\setminus \mathcal{V}[v_i,v_{s}]$, contradicting the 2-connectedness of $G$. Hence $v_iv_{s}\not\in E(G)$ and $d(v_{s})\leq 3$, since $n=8$. This implies that $G[v_1,v_8]$ properly contains $S_1$.

\textbf{Case 5.2.} $n\geq 9$.

As in Case 5.1, we may assume that $G[v_i,v_l]\cong G_{11}$, where $l\leq i<l\leq n$. Since $l-i=4$, either $i\neq 1$ or $l\neq n$. Without loss of generality, assume that $l\neq n$. By the same argument as in Case 5.1, we can prove that $v_lv_{l+1}\in E(G)$, $v_{i}v_{i-1}\in E(G)$ if $i\geq 2$, and $v_1v_n\in E(G)$ if $i=1$.

If we meet the case that $i=1$ and $v_1v_n\in E(G)$, then $l+1=6<n$. By relabelling the vertices of $G$ from $v_1,v_2,\ldots,v_{n-1},v_n$ to $v_2,v_3,\ldots,v_n,v_1$, we translate this case to the one that $i\geq 2$, $l\neq n$ and $v_lv_{l+1},v_{i}v_{i-1}\in E(G)$.

Therefore, we assume $i\geq 2$ and $v_iv_{i-1}\in E(G)$ in the following.

Since $n\geq 9$ and $|\mathcal{V}[v_{i-1},v_{l+1}]|=7$, either $i-1\neq 1$ or $l+1\neq n$. Without loss of generality, assume the latter. If $d(v_{l+1})\leq 3$, then $S_1$ is properly contained. If $d(v_{l+1})=4$, then there is a chord $v_{l+1}v_s$ with $1\leq s\leq n$ so that $s>l+1$ or $s<i-1$. Since $\vartheta(G)\geq 3$, $v_{l+1}v_s$ is non-crossed.

If $s<i-1$, then $i-1\neq 1$ and thus $d(v_{i-1})=4$ as otherwise $S_1$ is properly contained. This implies that there is a chord $v_{i-1}v_t$ with $s\leq t<i-1$.
Therefore, there is either (a) a chord $v_{l+1}v_s$ with $l+1<s\leq n$, or (b) a chord $v_{i-1}v_t$ with $1\leq t<i-1$. By symmetry, we assume that (a) exists. Among those $s$ satisfying (a), choose one $s$ so that $s-(l+1)$ is as large as possible.

If there is a chord $v_tv_{l+1}$ so that $1\leq t<i-1$, then $v_{t}v_{l+1}$ is non-crossed since $\vartheta(G)\geq 3$, which implies that there is no edge between $\mathcal{V}(v_t,v_{l+1})$ and $\mathcal{V}(v_{l+1},v_t)$.
Therefore, $\hat{G}_{t,l+1}$ is a good 2-connected outer-1-planar graph with order $n'$, where $8\leq n'=(l+1)-t+1<n$ (note that $t<i-1$ and $l-i=4$), and then by the induction hypothesis, $\hat{G}_{t,l+1}[v_t,v_{l+1}]$ properly contains one of the configurations from the list in (5). Since no edge exists between $\mathcal{V}(v_t,v_{l+1})$ and $\mathcal{V}(v_{l+1},v_t)$, any configuration properly contained in $\hat{G}_{t,l+1}[v_t,v_{l+1}]$ is properly contained in $G[v_t,v_{l+1}]$, and then in $G[v_1,v_n]$.

Hence we assume that there is no chord $v_tv_{l+1}$ with $1\leq t<i-1$. At this stage, there is no edge between $\mathcal{V}(v_{i-1},v_s)$ and $\mathcal{V}(v_{s},v_{i-1})$ by the choice of $s$ and the fact that $v_{l+1}v_s$ in non-crossed.

If $s=l+3$, then $v_{l+1}v_{l+2},v_{l+2}v_s\in E(G)$ and $d(v_{l+2})=2$ by the 2-connectedness of $G$. Since $v_{l+1}$ has degree 4, by the choice of $s$, we shall have $v_{i-1}v_{l+1}\in E(G)$, which implies that $S_3$ is properly contained in $G[v_{i-1},v_s]$, and thus in $G[v_{1},v_n]$.

If $s=l+4$, then by (1), $G[v_{l+1},v_s]$ (and so does $G[v_1,v_n]$) properly contains $G_1$ or $G_3$, unless $v_{l+1}v_{l+3}$ crosses $v_{l+2}v_s$, which case contradicts the fact that $\vartheta(G)\geq 3$.


If $s=l+5$, then by (2), $G[v_{l+1},v_s]$ (and so does $G[v_1,v_n]$) properly contains one of the configurations among $G_1,\ldots,G_4,G_{10}$,$G_{12}$, unless $v_{l+1}v_{l+4}$ crosses $v_{l+2}v_5$, which case contradicts the fact that $\vartheta(G)\geq 3$.


If $s=l+6$, then by (3), $G[v_{l+1},v_s]$ (and so does $G[v_1,v_n]$) properly contains one of the configurations among $G_1,\ldots,G_4,G_6,G_8,G_9,G_{10},G_{12},G_{14}$, unless $G[v_{l+2},v_s]\cong G_{11}$, which case contradicts the fact that $\vartheta(G)\geq 3$..

If $s=l+7$, then by (4), $G[v_{l+1},v_s]$ (and so does $G[v_1,v_n]$) properly contains one of the configurations among $G_1,\cdots,G_{10},G_{12},G_{13},G_{14}$, unless $G[v_{l+2},v_{l+6}]\cong G_{11}$ and $v_{l+1}v_{l+2},v_{l+6}v_s\in E(G)$. This contradicts the fact that $\vartheta(G)\geq 3$.

If $s\geq l+8$, then applying the induction hypothesis to the graph $G[v_{l+1},v_s]$, which has order $8\leq s-l<n$, we then conclude that $G[v_{l+1},v_s]$ properly contains at least one configuration from the list in (5), and so does $G[v_1,v_n]$.
\end{proof}

%
%
%
%
%

\begin{thm}\label{mainthm-2}
Every outer-1-planar graph $G$ with crossing distance at least 3 contains one of the configurations $G_1,\ldots,G_{10},G_{12},G_{13},G_{14}$, $S_1,S_2$ or $S_3$ as long as $\Delta(G)\leq 4$ and $\delta(G)\geq 2$.
\end{thm}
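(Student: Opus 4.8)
The plan is to reduce Theorem \ref{mainthm-2} to Lemma \ref{mainlemma} by a standard ``take a minimal counterexample and make it $2$-connected'' argument, so that the bulk of the casework is already done. First I would let $G$ be a counterexample to Theorem \ref{mainthm-2} with $|G|+|E(G)|$ minimum, fix a good drawing realizing $\vartheta(G)\ge 3$, and observe that $G$ must be connected: if $G$ were disconnected, some component $H$ has $\Delta(H)\le 4$, $\delta(H)\ge 2$, and crossing distance at least $3$ (crossings of $H$ form a subset of those of $G$, so distances only increase), so by minimality $H$ — and hence $G$ — contains one of the listed configurations. The next step is to argue that $G$ is $2$-connected. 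If $G$ has a cut-vertex $v$, then split $G$ at $v$ into blocks; some block $B$ containing $v$ still has $\delta(B)\ge 2$ except possibly at $v$ itself. Here I would use the usual trick: take an end-block $B$ (one that contains exactly one cut-vertex, say $v$); every vertex of $B$ other than $v$ keeps its $G$-degree, so all those vertices have degree between $2$ and $4$ in $B$; the drawing of $B$ inherited from the good drawing of $G$ still has each edge crossed at most once and all vertices on the outer face, and its crossing distance is $\ge 3$. If $B$ is $2$-connected we apply Lemma \ref{mainlemma} directly to $B$; the configurations produced there use only degree information about the solid/low-degree vertices, and those vertices (being $\ne v$, because a solid vertex in the relevant configurations has a degree constraint) have the same degree in $G$, so $G$ contains the configuration — contradiction. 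The only remaining low-connectivity cases are $|G|$ very small or $G$ itself being an edge/path-like object, which are incompatible with $\delta(G)\ge 2$ together with the presence of at least one crossing (and if there is no crossing at all, $G$ is a $2$-regular-or-more outerplanar graph, easily seen to contain $G_1$).

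With $G$ now a good $2$-connected outer-$1$-plane graph with $\Delta(G)\le 4$, $\delta(G)\ge 2$ and $\vartheta(G)\ge 3$, I would split on $|G|=n$. For $n\le 7$ the graph has bounded size and Lemma \ref{mainlemma} parts (1)--(4) already give one of $G_1,\dots,G_{14}$; the point is that for these small orders the configurations $G_5,G_6,G_7,G_{11}$ cannot occur ``cleanly'' under $\delta(G)\ge 2$ — or when they do, the extra edges forced by $2$-connectedness and $\delta\ge 2$ upgrade the configuration into one of the allowed ones or into an $S_i$. More precisely: $G_{11}$ occurring as $G[v_i,v_l]$ with $v_i,v_l$ of degree $>$ their degree in the picture forces (by the argument already used in Case 5.1 of Lemma \ref{mainlemma}) a pendant-path structure giving $S_1$ or $S_3$; and $G_5,G_6,G_7$ contain a degree-$2$ vertex whose neighbours, under $\delta\ge 2$ and $\vartheta\ge 3$, cannot carry the crossing pattern those configurations require, forcing instead $S_2$ or one of $G_1,\dots,G_4$. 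For $n\ge 8$ I would invoke Lemma \ref{mainlemma}(5) directly: it gives one of $G_1,\dots,G_{10},G_{12},G_{13},G_{14},S_1,S_3$, all of which are on the list in Theorem \ref{mainthm-2}. So for $n\ge 8$ there is essentially nothing left to prove beyond citing (5).

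Thus the real content is the small-order analysis ($n\in\{4,5,6,7\}$) showing that the ``bad'' configurations $G_5,G_6,G_7,G_{11}$ from Lemma \ref{mainlemma}(1)--(4) are, under the hypothesis $\delta(G)\ge 2$, always replaceable by something on the target list $G_1,\dots,G_{10},G_{12},G_{13},G_{14},S_1,S_2,S_3$. I would handle this by walking through exactly where each of $G_5,G_6,G_7,G_{11}$ was produced in Cases 1--4.1 of the proof of Lemma \ref{mainlemma} (they appear only in the ``double-crossing fan'' situations such as $v_1v_5$ crossing $v_3v_7$, and in the $G_{11}$-branches), and re-examine those branches with the added information $\delta(G)\ge 2$ and $\vartheta(G)\ge 3$: the degree-$2$ endpoints in $G_5,G_6,G_7$ have both neighbours inside the configuration, so $2$-connectedness plus $\vartheta\ge 3$ pins down the chords and yields $S_2$ (which is presumably exactly the configuration designed to absorb these cases); for $G_{11}$ one repeats the cut-vertex-avoidance argument of Case 5.1 verbatim at orders $6$ and $7$. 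The main obstacle I anticipate is bookkeeping: making sure that in each re-examined branch the vertices whose degrees matter for the output configuration are genuinely interior (so their $G$-degrees are controlled) and that $\vartheta(G)\ge 3$ — not merely $\ge 1$ — is actually used to kill the crossing patterns that would otherwise block the substitution. No single step is deep; the risk is missing a sub-case, which I would guard against by organizing the argument as ``for each occurrence site of $G_5,G_6,G_7,G_{11}$ in the proof of Lemma \ref{mainlemma}, verify the upgrade,'' mirroring the structure of that proof.
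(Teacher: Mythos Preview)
Your reduction to a $2$-connected end-block and appeal to Lemma \ref{mainlemma} is exactly the paper's strategy, and for $|H|\ge 8$ part (5) of that lemma finishes immediately, as you say. Two points in your small-order analysis need correction, however.

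First, you have misread the target list: $G_5,G_6,G_7$ all lie in the range $G_1,\dots,G_{10}$ and are therefore \emph{allowed} outputs of Theorem \ref{mainthm-2}. The only configuration produced by Lemma \ref{mainlemma}(1)--(4) that must be replaced is $G_{11}$, and it arises only for $n\in\{6,7\}$. The casework you propose for $G_5,G_6,G_7$ is unnecessary, and the sketched justification (that $\vartheta\ge 3$ kills their crossing pattern) is in any case false, since each of those configurations involves a single crossing and is fully compatible with $\vartheta(G)\ge 3$. Second, you overlook the explicit ``unless'' clauses in parts (1) and (2): for $n=4$ and $n=5$ the lemma may output no $G_i$ at all, but rather the exceptional path-with-chords structure. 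The paper handles these residual cases by observing that in the end-block the last boundary vertex ($u_4$ or $u_5$) then has degree at most $3$ in $G$, which yields $G_{10}$ (for $n=4$) or $G_9$ (for $n=5$); your plan omits this step. For $n\in\{6,7\}$ the $G_{11}$ upgrade is also not a verbatim repeat of Case 5.1 of the lemma's proof: the paper uses the bound $|H|\le 7$ to force a neighbour of the $G_{11}$-copy to have degree $\le 3$, producing $G_9$, $S_1$, or (when $|H|=6$) $S_2$ directly.
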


\begin{proof}
If $G$ is 2-connected, then let $H=G$, otherwise let $H$ be an end-block of $G$ (i.e., a 2-connected subgraph having only one cut-vertex of $G$).
Clearly $|H|\geq 3$ since $\delta(G)\geq 2$.
Let $u_1,u_2,\ldots,u_{|H|}$ be the vertices of $H$ with clockwise ordering on the boundary, where $u_1$ is the cut-vertex of $G$ if $H$ is an end-block of $G$.

If $|H|\geq 8$, then by Lemma \ref{mainlemma}(5), $H[u_1,u_{u_{|H|}}]$ properly contains one of those configurations, which is also contained in $G$.

If $6\leq |H|\leq 7$, then by Lemmas \ref{mainlemma}(3) and \ref{mainlemma}(4), $H[u_1,u_{u_{|H|}}]$ properly contains the configurations $G_1,\ldots,G_{10},G_{12},G_{13}$ or $G_{14}$ (and then $G$ contains one of them) if it does not properly contain $G_{11}$.
Suppose that $H[u_1,u_{u_{|H|}}]$ properly contains $G_{11}$. Let $H[u_i,u_l]\cong G_{11}$ and assume, without loss of generality, that $i<l$.
Suppose there is an edge $v_lv_s\in E(H)$ with $s\neq i,j,k$, because otherwise $v_l$ has degree 3 in $G$ and thus $G$ contains $G_9$. If $s\neq 1$, then $u_s$ has degree at most 3 in $G$ since $|H|\leq 7$, and thus $G$ contains $S_1$. If $s=1$ and $|H|=7$, then $i=3$ and $l=7$, because otherwise $u_1$ would be a cut-vertex of $H$, contradicts the 2-connectedness of $H$. This implies $u_1u_2,u_2u_3\in E(H)$ and thus $H[u_1,u_{u_{|H|}}]$ properly contains $S_1$, which is also contained in $G$. If $s=1$ and $|H|=6$, then by the 2-connectedness of $H$,
$i=2, l=6$ and $u_1u_2\in E(H)$, which implies that $H$ is isomorphic to the graph $S_2$ with $u_1$ corresponding to the vertex $z$ in that picture of $S_2$ in Figure \ref{s1s2s3}, and then $G$ contains $S_2$.

If $|H|=5$, then by Lemma \ref{mainlemma}(2), $H[u_1,u_5]$ properly contains $G_1,\ldots,G_4$ or $G_{12}$, and then $G$ contains one of them, or $\mathcal{V}[u_1,u_5]$ is a path and $u_1u_4,u_2u_4,u_2u_5\in E(G)$. In the latter case, we have $d(u_5)\leq 3$ and then $G$ contains $G_9$.

If $|H|=4$, then by Lemma \ref{mainlemma}(1), $H[u_1,u_4]$ properly contains $G_1$ or $G_3$, and then $G$ contains one of them, or $\mathcal{V}[u_1,u_4]$ is a path and $u_1u_3,u_2u_4\in E(G)$.
In this case, we have $d(u_4)\leq 3$ and then $G$ contains $G_{10}$.

If $|H|=3$, then $u_2$ and $u_3$ are two adjacent vertices of degree 2 in $G$. Hence $G$ contains $G_1$.
\end{proof}

\section{List coloring results}

\begin{figure}
  \begin{center}
  \includegraphics[width=15cm]{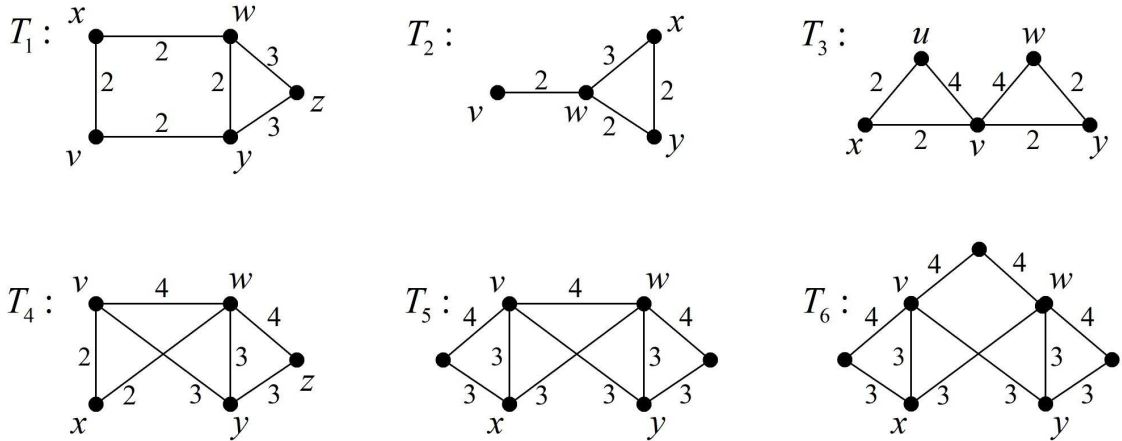}\\
  \end{center}
  \caption{Six list-colorable graphs}\label{list-colorable-graphs}
\end{figure}

Each of the graphs $T_i$ in Figure 1 is assumed to have an associated function $L_i$ that assigns a list of colors to each edge, of the size indicated by that edge.

\begin{thm}\label{six-useful-graphs}
(a) If $L_1(wx)\cap L_1(vy)=\emptyset$, then $T_1$ can be $L_1$-colored.\\
(b) For each integer $2\leq i\leq 6$, the graph $T_i$ is $L_i$-colorable.
\end{thm}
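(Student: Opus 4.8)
\textbf{Proof proposal for Theorem \ref{six-useful-graphs}.}

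The plan is to handle the six small graphs one at a time, in each case reducing the list-colouring question to an elementary counting argument about how many colours remain available at the ``last'' edge or edges, after the other edges have been greedily coloured in a well-chosen order. Since each $T_i$ is a small fixed graph, the only real content is choosing the order of colouring and tracking the sizes of the lists; I do not expect to need any structural lemma from earlier in the paper here.

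For part (a), the graph $T_1$ is small enough that I would argue directly. The hypothesis $L_1(wx)\cap L_1(vy)=\emptyset$ is there precisely so that once $wx$ and $vy$ are coloured, the two colours used are distinct; I would colour these two ``far apart'' edges first (each is uncrossed/independent enough that its only constraints come later), and then colour the remaining edges in an order so that each one, when it is reached, has at least as many available colours in its list as uncoloured neighbours — a Hall-type or simple greedy count. The disjointness hypothesis is exactly what prevents the one potential clash where a later edge sees both $wx$ and $vy$ among its neighbours and would otherwise lose two colours from a list that can only afford to lose one.

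For part (b), for each $i$ from $2$ to $6$ the strategy is the same greedy/degree-counting argument, but now no extra hypothesis is needed because the list sizes indicated on the edges of $T_i$ are already large enough: I would orient the colouring so that edges with the largest lists (relative to their number of later-coloured neighbours) are done last, and check in each of the five cases that the running inventory of available colours never drops to zero. Where a naive greedy order fails, the fix is typically to colour a ``bottleneck'' pair of edges simultaneously using a bipartite matching / Hall's-condition check on their two lists, exactly as the pattern of part (a) suggests.

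The main obstacle, such as it is, will be purely bookkeeping: making sure that in each $T_i$ the chosen order really does leave every edge with a non-empty residual list, and in particular identifying for each graph which one or two edges are the tight ones (the analogue of the pair $\{wx,vy\}$) and verifying the relevant small intersection/Hall condition there. Because the graphs are fixed and tiny, each verification is a finite check; the write-up will simply be a short case analysis, one short paragraph per graph, pinning down the colouring order and the critical count.
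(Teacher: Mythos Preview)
Your plan has a genuine gap: a single greedy pass in a clever order does \emph{not} suffice for these graphs, and the role you assign to the disjointness hypothesis in (a) is essentially backwards. In $T_1$ the edge $vx$ is adjacent to both $wx$ and $vy$; once you have coloured $wx$ and $vy$ first, the hypothesis $L_1(wx)\cap L_1(vy)=\emptyset$ guarantees those two colours are \emph{distinct}, so $vx$ (which has a list of size~$2$) may lose both colours and be left with nothing. The paper does not colour greedily from a fixed start: it counts all four ways to colour the pair $(wx,vy)$ and shows that at most one kills $vx$, at most one kills $wy$, and at most one extends to $vx,wy$ but then blocks $\{wz,yz\}$; disjointness is what makes each of these ``at most one'' bounds hold. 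That pigeonhole over several partial colourings is the actual content, and your proposal does not contain it.

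The same issue recurs for $T_3$--$T_6$. For $T_3$ the paper colours along the path $uxvyw$, gets stuck at $\{uv,vw\}$, \emph{deduces the exact form of several lists from being stuck}, and then recolours the path; for $T_4$ there is a cascade of ``if we can colour these two edges the same / this edge outside that list then finish, otherwise the lists must look like \ldots'', culminating in an explicit colouring; $T_5$ and $T_6$ again use a count over the nine colourings of $(vy,wx)$, together with the already-proved result for $T_2$. None of this is captured by ``choose an order so each edge has enough colours left''; the tight edges here are genuinely tight (residual list of size~$0$ in the worst case), and the fix is not a Hall-condition check on a single pair but either a global count over many partial colourings or a structural deduction followed by recolouring. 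Your write-up needs to supply these arguments explicitly for each $T_i$; the ``bookkeeping'' you anticipate is in fact the whole proof.
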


\begin{proof}

We consider each graph separately.

\textbf{Case 1:} $T_1$. If the edges $wx,wy,vy$ are $L_1$-colored with colors $a, b, c$, say, and this coloring cannot be extended to $wz$ and $yz$, then there is a color $d$ such that $L_1(wz) = \{a, b, d\}$ and $L_1(yz) = \{b, c, d\}$. These lists then determine $a$ and $c$ uniquely, as $\{a\} = L_1(wz) \setminus L_1(yz)$ and $\{c\} = L_1(yz) \setminus L_1(wz)$.

There are four different ways in which the edges $wx$ and $vy$ can be $L_1$-colored. Since $L_1(wx)\cap L_1(vy) =\emptyset$, at most one of these four colorings uses both colors in $L_1(vx)$, at most one uses both colors in $L_1(wy)$, and (as we have just seen) at most one can be extended to $wy$ and $vx$ but cannot then be extended to $wz$ and $yz$. Thus at least one of the four $L_1$-colorings of $wx$ and $vy$ can be extended to an $L_1$-coloring of $T_1$.

\textbf{Case 2:} $T_2$. If we can give $vw$ and $xy$ the same color, or alternatively give $wy$ a color not in $L_2(vw)$, then the remaining edges are easily colored. So we may assume $L_2(vw)\cap L_2(xy) = \emptyset$
and $L_2(vw) = L_2(wy)$. But then $L_2(wy) \cap L_2(xy) = \emptyset$, and any $L_2$-coloring of the three edges at $w$ can be extended to $xy$.

\textbf{Case 3:} $T_3$. Let $P$ denote the path $uxvyw$. $L_3$-color the edges in order along $P$. Then there is at least one color available for each of $uv$ and $vw$, and the only problem is if there is exactly one and it is the same color, say $d$, in each case. If this happens, then the colors along $P$ take the form $a, b, c, a'$ and $L_3(uv) = \{a, b, c, d\}$ and $L_3(vw) = \{a', b, c, d\}$, where possibly $a' = a$ but otherwise the colors are distinct. We may assume that $L_3(wy) = \{a', c\}$, as otherwise we could change the color of $wy$ and color $uv$ and $vw$ with $d$ and $a'$; and we may assume that $L_3(vy) = \{b, c\}$, as otherwise we could change the color of $vy$ and color $uv$ and $wy$ with $c$ and $vw$ with an available color. In a similar way, we may assume that $L_3(ux) = \{a, b\}$ and $L_3(vx) = \{b, c\}$. But then we can recolor the edges of $P$ with $b, c, b, c$
and $uv, vw$ with $a, d$.

\textbf{Case 4:} $T_4$. If possible, $L_4$-color $vy$ and $wx$ with the same color, and then color $vx$; the remaining edges can then be colored since $T_2$ is $L_2$-colorable. So we may assume that
$L_4(vy) \cap L_4(wx) = \emptyset$.

If possible, $L_4$-color $vx$ and $wy$ with the same color, and then color $wx$; the remaining edges can then be colored since they form a 4-cycle with at least two colors available for each edge, and a 4-cycle is edge-2-choosable. So we may assume that $L_4(vx) \cap L_4(wy) =\emptyset$.

If possible, $L_4$-color $vw$ with a color not in $L_4(vx) \cup L_4(wx)$; the remaining edges can then be colored since $T_1$ is $L_1$-colorable. So we may assume that $L_4(vw) = L_4(vx) \cup L_4(wx)$,
say $L_4(vx) = \{a,b\}$, $L_4(wx) = \{c, d\}$, and $L_4(vw) = \{a, b, c, d\}$.

If possible, $L_4$-color $vy$ and $wy$ with distinct colors not in $L_4(vw)$; then $vx$ and $wx$ are easily colored (they can be treated as non-adjacent, as their lists are disjoint), and the remaining edges can be colored in the order $yz,wz,vw$. So we may assume that there is a color $e$ such that $L_4(vy) = \{a, b, e\}$ and $L_4(wy) = \{c, d, e\}$.

Now choose a color $p \in \{a, b, c, d, e\}\setminus L_4(wz)$. It is easy to see that the edges not incident with $z$ can be $L_4$-colored so that $p$ is used on an edge at $w$. (If $p\in \{a, b, c, d\}$ then we can
use $p$ on $vw$, and if we use $c$ or $d$ on $vw$ then $e$ is used on $wy$.) Then the remaining edges $yz,wz$ can be colored in this order.

\textbf{Case 5:} $T_5$. If possible, $L_5$-color $vy$ and $wx$ with the same color, leaving at least three possible colors for $vw$. Since $T_2$ is $L_2$-colorable, at most one of these colors for $vw$ cannot be
extended to the triangle on the right, and at most one cannot be extended to the triangle on the left, and so at least one can be extended to both triangles, giving an $L_5$-coloring of $T_5$.

So we may assume that $L_5(vy) \cap L_5(wx) = \emptyset$. There are then nine different ways in which the two edges $vy$ and $wx$ can be $L_5$-colored. At most four of these ways use two colors
from $L_5(vw)$, the worst case being when $L_5(vw)$ contains two colors from $L_5(vy)$ and two from $L_5(wx)$. Similarly, at most two of these ways use two colors from $L_5(vx)$ and at most two use two colors from $L_5(wy)$. So there is at least one way of $L_5$-coloring $vy$ and $wx$ with two colors that are not both in $L_5(vw)$, not both in $L_5(vx)$, and not both in $L_5(wy)$. The remaining edges can then be colored in the same way as before.

\textbf{Case 6:} $T_6$. As in Case 5, $L_6$-color $vy$ and $wx$ either with the same color, or with two different colors that are not both in $L_6(vx)$ and not both in $L_6(wy)$. The remaining edges
can be colored by applying Case 2 twice.
\end{proof}

\begin{figure}
  \begin{center}
  \includegraphics[width=16cm,height=17cm]{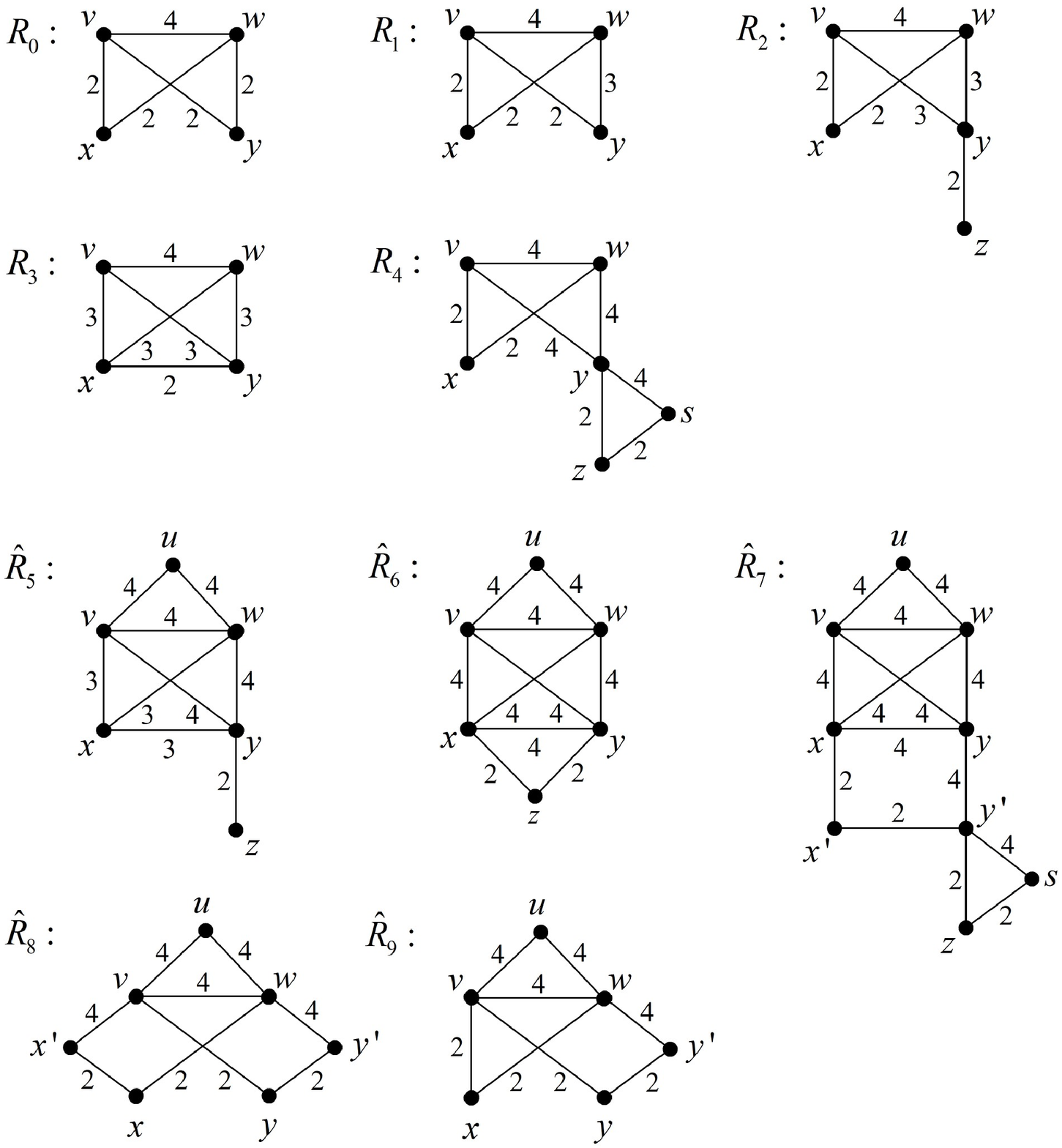}\\
  \end{center}
  \caption{More graphs for list-coloring}\label{more}
\end{figure}

Each of the graphs in Figure \ref{more} is assumed to have an associated function, which for
convenience we denote by the same letter $L$ in each case, that assigns a list of colors to each
edge, of the size indicated by that edge. In each graph, let $F$ denote the 4-cycle $vxwy$, with
edges $vx,wx,wy,vy$ in that order. Note that $F$ will be frequently used in the next proofs.

\begin{thm}\label{R-0-1-2-3}
(a) $R_0$ is $L$-colorable unless each color in $L(vw)$ is in the lists of exactly two adjacent edges of $F$;\\
(b) Each of the graphs $R_1,R_2,R_3$ is $L$-colorable.
\end{thm}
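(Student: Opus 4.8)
The plan is to treat Theorem~\ref{R-0-1-2-3} as a direct continuation of Theorem~\ref{six-useful-graphs}: each of $R_0,R_1,R_2,R_3$ is built around the central $4$-cycle $F=vxwy$ (with edges $vx,wx,wy,vy$ in that order) together with the chord $vw$ joining the two opposite vertices $v,w$ of $F$, plus a few pendant edges or degree-$2$ vertices. I would therefore reuse the two moves that powered Cases~4--6 of the proof of Theorem~\ref{six-useful-graphs}: (i) $L$-colouring two \emph{opposite} edges of $F$ with one common colour, which merges $F$ into a path and reduces the remainder to a graph handled by $T_2$ (or by the edge-$2$-choosability of the $4$-cycle, or by $T_5$/$T_6$); and (ii) when no common colour is available, so the two relevant lists are disjoint, a counting argument over the several ways of $L$-colouring a chosen independent pair of edges, discarding the few ``bad'' colourings: those that use up both colours of some third edge's list, and those that leave a $T_1$-shaped remainder which by Theorem~\ref{six-useful-graphs}(a) then fails to extend. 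Each colouring of the latter type is pinned to one specific colour in each of two specific lists, hence occurs at most once, exactly as in Case~4 there.

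For part~(a) I would argue by contradiction. Suppose $R_0$ has no $L$-colouring, and pick any $c\in L(vw)$. Colour $vw$ with $c$; since $vw$ is adjacent to all four edges of $F$, each of them loses $c$ from its usable palette, and, together with whatever pendant edges $R_0$ carries at $v,w,x,y$, what remains is an attempt to edge-colour the $4$-cycle $F$. Since a $4$-cycle is edge-$2$-choosable, failure forces some edge of $F$ to retain at most one usable colour; tracing the list sizes prescribed in Figure~\ref{more} back, this can happen only when $c$ lies in the lists of the two edges of $F$ incident with a common vertex. Running this over all $c\in L(vw)$ gives one direction of the ``unless''. For the converse direction one checks directly that if every colour of $L(vw)$ is of this type then no choice on $vw$ can be completed; this is the short explicit case analysis (split according to which pendant edges are actually present in $R_0$) that I expect to need the most care, since the statement is really an ``if and only if'' hidden inside the word ``unless'' and both implications must be nailed down.

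For part~(b) the aim is to reduce each of $R_1,R_2,R_3$ to part~(a). Each of these graphs has strictly more structure than $R_0$ --- an extra edge or an extra degree-$2$ vertex --- so I would either pre-colour that extra piece (or one edge of $F$) so as to destroy the exceptional configuration of part~(a), then invoke Theorem~\ref{R-0-1-2-3}(a) on what remains; or, when the list of $vw$ is too small for that to be possible, observe that its smallness already forces enough disjointness among the $F$-lists to fall back on move~(i) and finish via $T_2$, $T_5$, or $T_6$. Concretely, for whichever of the three has its extra edge incident with $v$ or $w$, that edge can always be coloured so that some colour of $L(vw)$ is no longer confined to two adjacent edges of $F$; for the one carrying an extra pendant vertex, its two edges behave like an independent pair and the count in move~(ii) gains one extra free colour to spend.

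The main obstacle is the bookkeeping in exactly the two places where the argument is genuinely case-based: verifying the characterisation in~(a) in both directions and uniformly over the pendant-edge patterns of $R_0$, and, in the counting steps, correctly bounding the number of bad colourings of the chosen independent pair --- in particular confirming that every $T_1$-type bad colouring is pinned down to a single colour in each of two fixed lists, so that it contributes at most one, exactly as in Case~4 of the proof of Theorem~\ref{six-useful-graphs}. Once those bounds are secured, each $R_i$ falls out via the same ``common colour on opposite $F$-edges, else count'' dichotomy used for $T_4,T_5,T_6$.
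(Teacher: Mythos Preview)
Your argument for part~(a) has a genuine gap. You propose to colour $vw$ first with some $c\in L(vw)$ and then observe that failure to complete on the $4$-cycle $F$ forces some edge of $F$ to have at most one usable colour. That much is fine. But from this you conclude that ``$c$ lies in the lists of the two edges of $F$ incident with a common vertex,'' and that inference is simply false: all you learn is that $c$ lies in the list of \emph{at least one} edge of $F$. A $4$-cycle with residual list sizes $(1,2,2,2)$ is \emph{not} always colourable---for instance $\{a\},\{a,b\},\{b,c\},\{a,c\}$ on consecutive edges has no proper colouring---so the per-colour argument does not pin $c$ to two adjacent edges. The paper avoids this by reversing the order: it colours $F$ first (possible since the $4$-cycle is edge-$2$-choosable), and then, if $vw$ cannot be coloured, reads off that the four colours on $F$ are distinct and exhaust $L(vw)$; a one-line recolouring argument then forces every $F$-list inside $L(vw)$ and rules out any colour appearing on two opposite edges. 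That direction is all the theorem asserts---``unless'' here is a one-way implication, not an ``if and only if''---so your planned converse verification is unnecessary.

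Your sketch for part~(b) is also off in its picture of the graphs. None of $R_1,R_2,R_3$ has an extra edge at $v$ or $w$: $R_1$ is just $R_0$ with $|L(wy)|=3$; $R_2$ adds a pendant $z$ at $y$; $R_3$ adds the edge $xy$. The paper's reductions are correspondingly direct. For $R_1$ one deletes a colour from $L(wy)$ and applies~(a), switching to a different deletion if the exceptional pattern arises. For $R_2$ one colours $yz$ first and applies~(a) to the rest; if the exceptional pattern appears, one exploits that the other colour in $L(yz)$ must sit in both reduced lists at $y$ and swaps. For $R_3$ one first tries to match a colour on $xy$ and $vw$; failing that, the two colours of $L(xy)$ lie outside $L(vw)$, and one finishes via the $R_1$ case. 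These are short targeted moves, not the $T_1/T_5/T_6$ counting machinery you invoke.
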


\begin{proof}
We consider each graph separately.

\textbf{Case 0:} $R_0$. Since a 4-cycle is edge-2-choosable, the edges of $F$ can be $L$-colored, with
colors $a, b, c, d$, say. Assume that $vw$ cannot now be colored. Then $a, b, c, d$ are all different
and $L(vw) = \{a, b, c, d\}$. If the list of some edge of $F$ has a color not in $\{a, b, c, d\}$ then we
can recolor that edge with such a color and use the freed color for $vw$; this contradiction
shows that all lists are subsets of $\{a, b, c, d\}$. If the lists of two opposite (i.e., non-adjacent)
edges of $F$ have a color in common, then we can color those two edges with that color and
the coloring is easily completed; this contradiction shows that each color occurs in the lists
of two adjacent edges of $F$, which proves (a).

\textbf{Case 1:} $R_1$. Delete a color from $L(wy)$ so that the list of every edge of $F$ now has two
colors. If these lists are of the form described in (a), delete a different color from $L(wy)$
instead; the coloring can now be completed.

\textbf{Case 2:} $R_2$. We will color $yz$ first; for $e\in \{vy,wy\}$, let $L_0(e)$ be obtained from $L(e)$ by
deleting the color of $yz$; for every other edge of $R_2-yz$, let $L_0(e) = L(e)$. We must color
$yz$ so that $R_2-yz$ has an $L_0$-coloring. We may assume that $L(yz)\subseteq L(vy) \cap L(wy)$, as
otherwise we can color $yz$ so that at least one of $L_0(vy)$ and $L_0(wy)$ contains three colors,
and the result will follow by Case 1 (possibly with $v$ and $w$ interchanged).

Let $L(yz) = \{p, q\}$. Color $yz$ with $p$. If the coloring cannot be completed, then the
lists $L_0$ have the form described in (a), with $L(vw) = L_0(vw) = \{a, b, c, d\}$ and each of
these colors being in the lists of two adjacent edges of $F$. Note that $q$ is in both $L_0(vy)$
and $L_0(wy)$, and $p$ is in neither of these lists, so that either (i) $p \in L_0(vx)\cap L_0(wx)$ or (ii)
$p \in  \{a, b, c, d\}$. If we recolor $yz$ with $q$ then we can complete the coloring, either by using $p$
on two nonadjacent edges of $F$, in case (i), or by using it on one edge, in case (ii).

\textbf{Case 3:} $R_3$. Let $L(vw) = \{a, b, c, d\}$. If possible, use the same color on $xy$ and $vw$; the
edges of the 4-cycle $vxwy$ can then be colored from the remaining lists of at least two colors.
So we may assume that $L(xy) = \{p, q\}$, where $p, q \not\in \{a, b, c, d\}$.

If the list of some edge of the 4-cycle $F$ contains $p$, first color $xy$ with $q$ and then extend
the coloring to $F$ so that some edge has color $p$; now we can color $vw$. If no edge of $F$
has $p$ in its list, color $xy$ with color $p$, and extend this coloring to the remaining edges by
Case 1.
\end{proof}

For each graph $R_i$ in Figure \ref{more}, let $\widehat{R}_i$ denote the graph obtained from $R_i$ by adding a
new vertex $u$ with neighbors $v$ and $w$, and let $L$ be extended to $\widehat{R}_i$ by giving lists of size 4
to the new edges $uv$ and $uw$.

\begin{thm}\label{R-1-2-bar}
(a) $\widehat{R}_1$ is $L$-colorable;\\
(b) $\widehat{R}_2$ is $L$-colorable if (i) some color is in the lists of two opposite (non-adjacent) edges of
$F$, or (ii) $L(vy)=L(wy)$.
\end{thm}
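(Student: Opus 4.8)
The plan is to prove both parts by peeling off the added vertex $u$ (and, in (b), also the vertex $z$) and reducing to the colourability of $R_1$ from Theorem~\ref{R-0-1-2-3}(b), to part (a) itself, and --- in the rigid leftover cases --- to the exceptional clause of Theorem~\ref{R-0-1-2-3}(a).

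\textbf{Part (a).} Recall $R_1$ is the $4$-cycle $F=vxwy$ together with the chord $vw$, with $|L(vw)|=4$, $|L(wy)|=3$ and $|L(vx)|=|L(wx)|=|L(vy)|=2$; adding $u$ adjacent to $v,w$ with $|L(uv)|=|L(uw)|=4$ makes $uvw$ a triangle. First $L$-colour $R_1$ by Theorem~\ref{R-0-1-2-3}(b); it then remains to colour $uv$ and $uw$. At $uv$ only $c(vw),c(vx),c(vy),c(uw)$ are forbidden, and symmetrically at $uw$, so since $|L(uv)|=|L(uw)|=4$ the greedy extension fails only in the ``pinned'' situation where the residual list of each of $uv,uw$ shrinks to a single colour and these coincide; a direct computation shows this forces $L(uv)=\{c(vw),c(vx),c(vy),\alpha\}$ and $L(uw)=\{c(vw),c(wx),c(wy),\alpha\}$ with the six listed colours distinct. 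The crux is to use the slack in the colouring of $R_1$ to avoid this: $c(vw)$ may be picked from a $4$-list, a colour may be deleted from $L(wy)$ in two ways, and $F$ being edge-$2$-choosable its colouring may be varied. I would split on whether some colour lies in the lists of two opposite edges of $F$, i.e.\ whether $L(vx)\cap L(wy)\neq\emptyset$ or $L(wx)\cap L(vy)\neq\emptyset$: if so, colour that pair of opposite edges with the shared colour, which turns one vertex of $F$ pendant and frees enough colours that a short case check finishes $vw$ and then $uv,uw$; otherwise the two pairs of opposite edges of $F$ have disjoint lists, and the $3$-list on $wy$ provides room to recolour $F$ (and, if necessary, $vw$) out of the pinned configuration, or else one appeals to the exception in Theorem~\ref{R-0-1-2-3}(a) applied to the copy of $R_0$ obtained from $R_1$ by shrinking $L(wy)$.

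\textbf{Part (b).} Now $\widehat R_2=\widehat R_1+z$, where $z$ is adjacent only to $y$, $|L(yz)|=2$, and in $R_2$ both $vy$ and $wy$ have size-$3$ lists. Colour $yz$ first. If $L(yz)\not\subseteq L(wy)$, colour $yz$ with a colour outside $L(wy)$: then $wy$ retains a $3$-list, $vy$ retains a list of size $\geq 2$, and what remains is a copy of $\widehat R_1$ (an oversized list on $vy$ only helps), colourable by (a); symmetrically if $L(yz)\not\subseteq L(vy)$. Hence we may assume $L(yz)=\{p,q\}\subseteq L(vy)\cap L(wy)$, which is precisely the case where an extra hypothesis is needed. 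Under (i), some colour $\beta$ lies in the lists of two opposite edges of $F$; colour those two edges with $\beta$, making a vertex of $F$ pendant and leaving a triangle $uvw$ with pendant paths that can be coloured greedily, taking care to avoid both $p$ and $q$ on the edge $vy$ (possible since $|L(vy)|=3$). Under (ii), $L(vy)=L(wy)$; colouring $yz$ with $p$ makes $L_0(vy)=L_0(wy)$ equal $2$-lists, so that $\{c(vy),c(wy)\}$ is forced in every completion, and this rigidity --- together with the alternative of colouring $yz$ with $q$ --- lets one finish the colouring via the exception in Theorem~\ref{R-0-1-2-3}(a).

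The main obstacle throughout is the pinned situation of part (a): after colouring the ``$R_i$-part'', the two size-$4$ edges $uv,uw$ can be simultaneously forced to the same residual colour, and everything turns on showing that the available slack always breaks this. In (a) that slack comes from the $3$-list on $wy$ together with the $2$-choosability of $F$; in (b) the vertex $z$ consumes part of it, which is exactly why (b) needs hypothesis (i) or (ii) --- these are the two ways of restoring the slack (an opposite-edge colour collapse, or the rigidity $L(vy)=L(wy)$). Verifying that they suffice in every sub-case is intricate but routine, of the same flavour as the analyses of $T_1$--$T_6$ and $R_0$--$R_3$ carried out above.
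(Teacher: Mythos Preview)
Your overall strategy matches the paper's: colour $R_1$ (resp.\ $R_2$) first using Theorem~\ref{R-0-1-2-3}, then extend to $uv,uw$, and analyse the ``pinned'' situation where both residual lists collapse to the same singleton. However, what you have written is a sketch, not a proof: the entire substance of this theorem \emph{is} the case analysis showing the pinned situation can always be escaped, and you have replaced that analysis with phrases like ``a short case check finishes'', ``provides room to recolour $F$ out of the pinned configuration'', and ``intricate but routine''. It is not routine. The paper needs five structural observations (O1)--(O5) about what the lists must look like if no single recolouring breaks the pin, followed by a four-way case split on whether the opposite edges of $F$ currently carry equal colours ($a=a'$? $b=b'$?), with Case~4 alone requiring a further non-obvious argument that $c$ cannot appear in any list on $F$. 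None of this structure appears in your proposal.

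Two of your specific suggestions are also problematic. For (b)(i) you propose to colour the two opposite edges of $F$ with the shared colour $\beta$ \emph{first} and then finish greedily; but the remaining graph is a triangle $uvw$ with two pendant edges whose residual list sizes are only $\geq 1, \geq 1, \geq 1, \geq 2, \geq 2$, and this can still be pinned --- you have not escaped the difficulty, only relocated it. The paper instead uses (i) at the very end: after (O1)--(O5) and Cases~1--3 are done, hypothesis (i) lets one choose the colour of $yz$ so that the resulting $L_0$-lists still share a colour on opposite edges, forcing any extension to land in an already-handled case. For (b)(ii), your remark that $\{c(vy),c(wy)\}$ is forced as a set is true but does not by itself finish anything; the actual mechanism is that $L(vy)=L(wy)$ permits \emph{swapping} the colours of $vy$ and $wy$, which in the pinned situation moves $a'\notin L(uv)$ onto $vy$ and $b'\notin L(uw)$ onto $wy$, breaking the pin in one stroke. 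You have not identified this move.
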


\begin{proof}
We know from Theorem \ref{R-0-1-2-3} that $R_1$ and $R_2$ are $L$-colorable; so consider an $L$-
coloring of one of them; we will try to extend it to $\widehat{R}_1$ or $\widehat{R}_2$ as appropriate. Let $p$ be the
color given to $yz$ in $R_2$. For $e \in \{vy,wy\}$ in $\widehat{R}_2$, let $L_0(e) = L(e) \setminus \{p\}$; for every other edge
of $\widehat{R}_2$, and every edge of $\widehat{R}_1$, let $L_0(e) = L(e)$. As in Theorem \ref{R-0-1-2-3}, we will assume in $\widehat{R}_2$
that $L(yz)\subseteq L(vy)\cap L(wy)$, as otherwise we can color $yz$ so that at least one of $L_0(vy)$ and
$L_0(wy)$ contains three colors, which is covered by the proof for $\widehat{R}_1$.

Let the edges $vx, xw,wy, yv$ of $F$ have colors $a, b, a', b'$ in this order, and let $vw$ have color
$c$, where possibly $a = a'$ and/or $b = b'$ but the colors are otherwise distinct. There is at
least one color available for each of $uv$ and $uw$, and the only problem arises if it is the same
color, say $d$, in each case, i.e., $L(uv) = \{a, b', c, d\}$ and $L(uw) = \{a', b, c, d\}$, where clearly
$d \not\in \{a, a', b, b¡ä, c\}$. Assume that this problem cannot be avoided by recoloring one or more
of the edges of $F$ and/or $vw$. Then the following hold.

\begin{description}
  \item[(O1)] $L_0(e) \subseteq \{a, a', b, b', c\}$ for each $e \in E(F)$, and $L(vw) \subseteq \{a, a', b, b', c, d\}$; otherwise
we can avoid the problem by recoloring a single edge.
  \item[(O2)] If $d \in L(vw)$ then $L_0(e) \subseteq \{a, a', b, b'\}$ for each $e \in E(F)$. To see this, recolor $vw$
with $d$ and apply (O1) with $c$ and $d$ interchanged.
  \item[(O3)] If $b \neq b'$  then $b \not\in L_0(vy)$ and $b' \not\in L_0(wx)$, as otherwise we can avoid the problem
by recoloring $vy$ with $b$ (the same as $wx$) or $wx$ with $b'$ (the same as $vy$).
  \item[(O4)] If $b \neq b'$ and $b' \in L(vw)$ then $c \not\in L_0(vy)$, as otherwise we can avoid the problem by
swapping the colors of $vy, vw$ from $b', c$ to $c, b'$.
  \item[(O5)] In $\widehat{R}_2$, $L(yz) = \{p, q\}$ where, for each $e \in \{vy,wy\}$, $p \in L(e) \setminus L_0(e)$, $q \in L_0(e)$, and
$L(e) = L_0(e) \cup \{p\}$.
\end{description}

We now consider four cases.

\textbf{Case 1:} $a = a'$ and $b = b'$. Then $L(uv) = L(uw) = L(vw) = \{a, b, c, d\}$, and $L_0(e) = \{a, b\}$
for every edge $e \in E(F)$ by (O1) and (O2). This is impossible in $\widehat{R}_1$ (where $|L_0(wy)| = 3$),
and it can be avoided in $\widehat{R}_2$ by changing the color of $yz$. In the latter case, by (O5),
$L(yz) = \{a, p\}$ or $\{b, p\}$, where $p \not\in \{a, b\}$. If $L(yz) = \{a, p\}$ then we can swap the colors of
$wy, yz$ from $a, p$ to $p, a$ and (re)color $uv, vw,wu$ with $c, d, a$ if $p = c$ and with $d, c, a$ otherwise;
the case $L(yz) = \{b, p\}$ is similar.

\textbf{Case 2:} $a = a'$ and $b \not= b'$. Then $L(vw) \subset \{a, b, b', c, d\}$. Thus $c \not\in L_0(vy)$ by (O2) and (O4),
and $b \not\in L_0(vy)$ by (O3), and so $L_0(vy) = \{a, b'\}$. Similarly, $L_0(wx) = \{a, b\}$. Now we can
avoid the problem by recoloring $vy$ and $wx$ with $a$, and then recoloring $vx$ and $wy$, unless
$L_0(vx) = L_0(vy) = \{a, b'\}$ and $L_0(wx) = L_0(wy) = \{a, b\}$. This is impossible in $\widehat{R}_1$, and it
can be avoided in $\widehat{R}_2$ by changing the color of $yz$. In the latter case, by (O5), $L(yz) = \{a, p\}$
for some color $p \not\in \{b, b'\}$ (since $p \not\in L_0(vy)\cup L_0(wy)$), and we can swap the colors of $wy$ and
$yz$ and (re)color $uv, vw,wu$ exactly as described at the end of Case 1.

\textbf{Case 3:} $a \not= a'$ and $b = b'$. This is equivalent to Case 2. (Interchange $v$ and $w$.)

\textbf{Case 4:} $a \not= a'$ and $b \not= b'$. We may assume that no color occurs in the lists $L_0$ of two
opposite edges of $F$, as otherwise we could use it on those two edges and get a new coloring
that is covered by a previous case. Thus each of the colors $a, a', b, b', c$ occurs in the list $L_0$
of either at most one, or two adjacent, edges of $F$.

\vspace{2mm}The proof of (b) is now easily completed. If (i) holds, then we can choose the color of
$yz$ so that some color occurs in the lists $L_0$ of two opposite edges of $F$, and this coloring
can be extended to $R_0$ by Theorem \ref{R-0-1-2-3}; as just remarked, this is covered by a previous case.
If (ii) holds, then we can avoid the problem by swapping the colors of $vy$ and $wy$, thereby
recoloring $vy$ with $a'\not\in L(uv)$ and $wy$ with $b' \not\in L(uw)$. So from now on we assume the graph
is $\widehat{R}_1$.

We claim that c does not occur in the list of any edge of $F$. Suppose it does, say
$c \in L_0(vy) = L(vy)$. (The other cases are similar.) By (O2) and (O4), this implies that
$\{b', d\} \cap L(vw) = \emptyset$ and so $L(vw) = \{a, a', b, c\}$; thus, by the analogs of (O4) for other edges
of $F$, $c \not\in L_0(vx)$ since $a \in L(vw)$, $c \not\in L_0(wx)$ since $b\in L(vw)$, and $c \not\in L_0(wy)$ since
$a' \in L_0(vw)$. It follows that $b'$ is in the list of another edge of $F$, as otherwise the lists of
$vx, wx$ and $wy$ contain the colors $a, a'$ and $b$ twice each, and some color is in the lists of two
opposite edges. So we can color $vy$ and $uw$ with $c$, some other edge of $F$ with $b'$, and then
complete the coloring using two different colors from $\{a, a', b\}$ on $F$ and the third color on
$vw$; and now $uv$ can be colored as it has two adjacent edges with the same color, $c$.

It follows that $L_0(e) \subset \{a, a', b, b'\}$ for every edge $e$ of $F$ in $\widehat{R}_1$, and so one of these colors
must occur on at least three edges, which contradicts the first paragraph of Case 4. This
completes the proof of Theorem \ref{R-1-2-bar}.
\end{proof}

\begin{lem}\label{color-triangle}
Let $T(syz)$ be a triangle with an associated function $L$ that assigns lists of
two colors to edges $sz$ and $yz$ and a list of four colors to $sy$. Then there are distinct colors
$a, b, c, d$ such that $L(yz) = \{a, c\}$, $\{b, d\} \in L(sy)$, and $T(syz)$ can be $L$-colored with the two
edges at $y$ receiving any of the three pairs of colors $\{a, b\}$, $\{b, c\}$ and $\{c, d\}$.
\end{lem}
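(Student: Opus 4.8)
The plan is to fix the labelling $L(yz)=\{a,c\}$ and then choose two further colours $b,d\in L(sy)$, all four distinct, so that each of the three prescribed colourings of the two edges at $y$ — namely $(yz,sy)$ coloured $(a,b)$, $(c,b)$, and $(c,d)$, which are the only ways to realise the pairs $\{a,b\},\{b,c\},\{c,d\}$ since $b,d\notin L(yz)$ — extends to the third edge $sz$. Writing $L(sz)=\{p,q\}$ and noting that $sz$ meets $yz$ at $z$ and $sy$ at $s$, the key elementary remark is that a colouring of $sy$ and $yz$ fails to extend to $sz$ exactly when $L(sz)$ equals the (necessarily $2$-element) set formed by the colour on $yz$ together with the colour on $sy$. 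Thus the three conditions I must secure are simply $L(sz)\neq\{a,b\}$, $L(sz)\neq\{b,c\}$ and $L(sz)\neq\{c,d\}$.

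I would then split according to $|L(yz)\cap L(sz)|$. If this intersection is empty, each of the sets $\{a,b\},\{b,c\},\{c,d\}$ contains an element of $L(yz)$ and hence cannot equal $L(sz)$, whatever $b,d$ are; so I just pick any two distinct $b,d\in L(sy)\setminus\{a,c\}$, which exist since $|L(sy)|=4$. If the intersection has size $2$, i.e.\ $L(sz)=\{a,c\}$, then $\{a,b\}\neq\{a,c\}$ (as $b\neq c$), $\{b,c\}\neq\{a,c\}$ (as $b\neq a$) and $\{c,d\}\neq\{a,c\}$ (as $d\neq a$), so again any distinct $b,d\in L(sy)\setminus\{a,c\}$ work.

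The only substantive case is $|L(yz)\cap L(sz)|=1$. Here I would use the freedom in the labelling to call the common colour $a$, so that $c\notin L(sz)$; let $f$ be the other colour of $L(sz)$, whence $L(sz)=\{a,f\}$ with $f\neq a$ and $f\neq c$. Now $\{b,c\}$ and $\{c,d\}$ cannot equal $L(sz)$ since $c\notin L(sz)$, so the sole remaining requirement is $b\neq f$. Since $a,c,f$ are three distinct colours and $|L(sy)|=4$, I can choose $b\in L(sy)\setminus\{a,c,f\}$ and then $d\in L(sy)\setminus\{a,c,b\}$; this gives distinct $a,b,c,d$ with $b,d\in L(sy)$ satisfying all three conditions, and the corresponding colourings of $T(syz)$ are exactly as claimed. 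As these three cases exhaust the possibilities, the lemma follows.

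I do not anticipate a real obstacle: the argument is a short case analysis on $|L(yz)\cap L(sz)|$, and the only place demanding any care is the last case, where one must check that $f\neq c$ (which is precisely the point of labelling the shared colour $a$ rather than $c$) and that the residual lists $L(sy)\setminus\{a,c,f\}$ and $L(sy)\setminus\{a,c,b\}$ are nonempty, both of which are immediate from $|L(sy)|=4$.
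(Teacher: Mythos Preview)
Your proof is correct and follows essentially the same idea as the paper's: both reduce the lemma to arranging that $L(sz)\notin\{\{a,b\},\{b,c\},\{c,d\}\}$, and both exploit the freedom in labelling $a,c$ (and in choosing/labelling $b,d$) to achieve this. The paper compresses the argument into a single relabelling remark---pick any $b,d\in L(sy)\setminus\{a,c\}$ and, if $L(sz)$ happens to consist of one of $a,c$ and one of $b,d$, swap labels so that $L(sz)=\{a,d\}$---whereas you unfold the same content into an explicit case split on $|L(yz)\cap L(sz)|$; the substance is identical.
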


\begin{proof}
Let $L(yz) = \{a, c\}$ and let $b, d$ be two colors in $L(sy) \setminus \{a, c\}$, labeled so that
$L(sz) \neq \{a, b\}, \{b, c\}$ or $\{c, d\}$. (In other words, if $L(sz)$ comprises one of $a, c$ and one of $b, d$,
then swap the labels of the colors so that $L(sz) = \{a, d\}$.) Then the result clearly holds.
\end{proof}

\begin{lem}\label{graphbelow}
Let $p$ be an arbitrary but fixed color. For each of the following conditions,
there is an $L$-coloring of the edges of $\widehat{R}_7$ below $xy$ in Figure \ref{more} such that the condition holds:\\
(C1) $xx'$ and $yy'$ have different colors;\\
(C2) $xx'$ does not have color $p$;\\
(C3) $yy'$ does not have color $p$.
\end{lem}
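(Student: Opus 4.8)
The plan is to reduce the statement to repeated applications of the gadget lemmas already established. Let $D$ denote the subgraph of $\widehat{R}_7$ consisting of the edge $xy$ together with all the edges drawn below it in Figure \ref{more}; reading off the figure, $D$ is a small outer-1-plane graph containing the vertices $x,y,x',y'$, the edges $xx'$ and $yy'$, and (below them) a short core consisting of one or two triangles or a short cycle, with list sizes as indicated. The three conditions split into two flavours: (C1) is a constraint forcing two designated edges ($xx'$ and $yy'$) to receive different colours, while (C2) and (C3) are symmetric constraints forcing one designated edge to avoid the single fixed colour $p$. I would treat each flavour by a greedy colouring of $D$ that processes the core first and reserves $xx'$ and $yy'$ until the end, falling back on a bounded recolouring argument when the greedy step stalls.

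For (C2) (and hence, by the left--right symmetry of $D$, for (C3)), I would colour every edge of $D$ except $xx'$ first. The core below $x$ is a triangle with a size-$4$ edge, so Lemma \ref{color-triangle} (or, if the core is larger, Theorem \ref{R-0-1-2-3}) lets us colour it; the point is that it can be coloured so that $xx'$ still has at least two admissible colours remaining. Since at most one of those two colours equals $p$, we can finish with $xx'\neq p$. The only way this can fail is if the residual list of $xx'$ is forced down to essentially $\{p\}$; I would rule this out exactly as in the proofs of Theorems \ref{R-0-1-2-3} and \ref{R-1-2-bar}, by recolouring the neighbouring edge of $xx'$ whose list has size $\ge 3$ (guaranteed by the figure) so as to free a colour different from $p$ for $xx'$.

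For (C1) I would colour $D$ leaving \emph{both} $xx'$ and $yy'$ to the end; after colouring the core, each of these two edges retains at least two admissible colours, so the only obstruction is if every completion forces them equal. In that degenerate situation I would backtrack and recolour a single edge of the core — an edge of the $4$-cycle $F=vxwy$ or an edge of a triangle below $xy$ — so as to alter one of the two residual lists; Lemma \ref{color-triangle} is the natural engine here, since it guarantees that the two edges at $x$ (resp.\ at $y$) can realise any of three distinct colour-pairs, and this extra freedom is exactly what is needed to steer $xx'$ and $yy'$ onto different colours. The role of the size-$4$ edges in $D$ is to make sure the count ``at least two admissible colours'' survives all the colourings of the core.

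The main obstacle will be the degenerate-list cases: the analogues of the ``$R_0$ exception'' in Theorem \ref{R-0-1-2-3}(a), where every colour of some size-$4$ list lies on two adjacent edges of a $4$-cycle, or where two size-$2$ lists coincide. In those configurations the naive counting fails and one must exhibit an explicit swap. I expect to dispatch them by the same moves used in Case 4 of the proof of Theorem \ref{R-1-2-bar} — swapping colours along $F$ or along a triangle, re-colouring the ``$vw$-type'' chord, and then colouring $xx'$ and $yy'$ from the colours thereby freed — and the only genuine case analysis is checking that each such degenerate configuration of $D$ admits one of these swaps compatible with whichever of (C1), (C2), (C3) is being enforced.
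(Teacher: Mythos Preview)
Your proposal rests on a misreading of the subgraph $D$ below $xy$ in $\widehat{R}_7$. There is no left--right symmetry: on the $x$-side one has only the pendant edge $xx'$ (list size $2$) followed by $x'y'$ (list size $2$), whereas on the $y$-side the edge $yy'$ has list size $4$ and $y'$ sits in the triangle $sy'z$ (with $|L(sy')|=4$ and $|L(y'z)|=|L(sz)|=2$). Consequently your claim that (C3) follows from (C2) ``by the left--right symmetry of $D$'' is false. In fact (C2) is the trivial condition --- colour $xx'$ first avoiding $p$, then walk greedily through $x'y', y'z, sz, sy', yy'$ --- while (C3) is the delicate one: after the three edges incident to $y'$ are coloured, only one colour may remain for $yy'$, and it may well be $p$.

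Your references to the $4$-cycle $F=vxwy$, to Theorem~\ref{R-0-1-2-3}, and to the ``$R_0$ exception'' are also misplaced: $F$ lies entirely \emph{above} $xy$ and plays no role in this lemma. The paper's proof never touches $F$. Instead it applies Lemma~\ref{color-triangle} to the single triangle $sy'z$, obtaining distinct colours $a,b,c,d$ with $L(y'z)=\{a,c\}$, $\{b,d\}\subseteq L(sy')$, and three realisable colour-pairs $\{a,b\},\{b,c\},\{c,d\}$ on the two triangle edges at $y'$. That three-way freedom at $y'$, combined with a short two-case split on whether $L(x'y')=\{b,c\}$, is precisely what allows one to steer $yy'$ away from $p$ (for (C3)) or away from the colour of $xx'$ (for (C1)). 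The recolouring machinery you invoke from Theorem~\ref{R-1-2-bar} is neither applicable here nor needed.
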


\begin{proof}
Choose colors $a, b, c, d$ that satisfy the conclusion of Lemma \ref{color-triangle} for triangle $T(sy'z)$
in $\widehat{R}_7$. The conclusion of Lemma \ref{graphbelow} clearly holds for (C2), as we can color $xx'$ differently
from $p$ and continue the coloring in order along the walk $xx'y'zsy'y$. The same holds for
(C1) if $b$ or $c \in L(xx')$, as then we can color $xx'$ and $sy'$ with $b$, or $xx'$ and $y'z$ with $c$, and
then color the remaining edges in order along the above walk. So in proving (C1) we may
assume that $L(xx') \cap \{b, c\} = \emptyset$. We must prove (C1) and (C3). There are two cases.

\textbf{Case 1:} $L(x'y') \not= \{b, c\}$. Color $x'y'$ with $q \not\in \{b, c\}$, then color $xx'$, and let $p$ denote the
color of $xx'$ if we are proving (C1). Since $q$ cannot equal both $a$ and $d$, assume w.l.o.g. $q \not= a$,
and color $y'z, sy'$ with $a, b$ if $L(yy¡ä) = \{b, c, p, q\}$ and with $c, b$ otherwise; then the coloring
can be completed with $yy'$ not receiving color $p$.

\textbf{Case 2:} $L(x'y') = \{b, c\}$. In proving (C1), give $xx'$ a color and call it $p$, noting that
$p \not\in \{b, c\}$ by the assumption made before Case 1. Color $x'y'$, $y'z$, $sy'$ with $b, c, d$ if $L(yy') =
\{a, b, c, p\}$ and with $c, a, b$ otherwise. Then the coloring can be completed with $yy'$ not
receiving color $p$.
\end{proof}

\begin{lem}\label{kite-lemma}
Let $T(vwx)$ be a triangle and $vy$ be an edge incident with it.
If there is an associated function $L$ that assigns lists of two colors to edges $vy,wx$ and $vx$ and a list of four colors to $vw$ so that $L(vy)\cap L(wx)=\emptyset$, then we can $L$-color $vy,wx$ and $vx$ so that there are at least two colors still available for $L$-coloring $vw$ while extending this partial coloring if (i) $L(vw)\neq L(vy)\cup L(wx)$, or (ii) $L(vx)\not\subset L(vw)$.
\end{lem}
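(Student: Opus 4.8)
The plan is to exploit the fact that in this ``kite'' the edge $vw$ is adjacent to all three small-list edges $vy$, $wx$, $vx$, while $|L(vw)|=4$, whereas $vy$ and $wx$ are not adjacent to each other and carry disjoint lists. Hence, if I can properly $L$-color $\{vy,wx,vx\}$ so that \emph{one of the three colors used lies outside $L(vw)$}, then at most two of those three colors lie in $L(vw)$, so at least two colors of $L(vw)$ remain available for $vw$ — which is exactly the desired conclusion. Thus the whole task reduces to: locate a color outside $L(vw)$ that can legally be placed on one of the three small-list edges, and then complete the coloring of the other two freely.

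First I would dispatch hypothesis (ii). If $L(vx)\not\subset L(vw)$, pick $\gamma\in L(vx)\setminus L(vw)$ and set $c(vx)=\gamma$. The remaining edges $vy$ and $wx$ share no endpoint, and each is adjacent (among the already-colored edges) only to $vx$, so it suffices to choose $c(vy)\in L(vy)\setminus\{\gamma\}$ and $c(wx)\in L(wx)\setminus\{\gamma\}$; both sets are nonempty since $|L(vy)|=|L(wx)|=2$. As $c(vx)=\gamma\notin L(vw)$, at most two of the colors on $vy,wx,vx$ lie in $L(vw)$, as wanted.

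Next I would handle hypothesis (i). Since $L(vy)\cap L(wx)=\emptyset$ and each of these lists has size $2$, the union $L(vy)\cup L(wx)$ has exactly four colors, so $L(vw)\ne L(vy)\cup L(wx)$ forces a color $\alpha\in(L(vy)\cup L(wx))\setminus L(vw)$. If $\alpha\in L(vy)$, set $c(vy)=\alpha$ and then color the two-edge path $wxv$ (edges $wx$ and $vx$, where $vx$ is additionally forbidden to use $\alpha$): color first whichever of these two edges has the fewer remaining colors — one has a list of size $\ge 1$, the other of size $2$ — so the extension always succeeds. If instead $\alpha\in L(wx)$, set $c(wx)=\alpha$ and symmetrically color the two-edge path $yvx$ (edges $vy$ and $vx$, with $vx$ forbidden to use $\alpha$). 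In both cases the edge colored $\alpha$ contributes nothing to $L(vw)$, so at least two colors of $L(vw)$ survive for $vw$.

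This lemma is really a careful case analysis rather than a deep argument, and I do not expect a genuine obstacle. The one place to be attentive is the completion step in case (i): after pinning one small-list edge to the color $\alpha\notin L(vw)$, the other two small-list edges form a path of length two in which one edge has lost at most one color from its size-$2$ list; coloring the more constrained edge first makes the extension automatic, since a path on two edges is trivially edge-$2$-choosable. Verifying that the few sub-cases (whether or not $\alpha\in L(vx)$, and the $v$/$w$ symmetry) are all covered is the only bookkeeping required.
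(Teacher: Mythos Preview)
Your proof is correct and follows essentially the same approach as the paper: in case (ii) both you and the paper color $vx$ with a color outside $L(vw)$ and then extend freely to $vy,wx$; in case (i) both arguments locate a color in $L(vy)\cup L(wx)$ lying outside $L(vw)$ and place it on the appropriate edge, then greedily complete the path formed by the remaining two edges. The only cosmetic difference is that the paper phrases case (i) via a transversal pair $\{a,c\}\not\subset L(vw)$ from $L(vy)\times L(wx)$, whereas you pick a single color $\alpha$; the content is the same.
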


\begin{proof}
  Suppose that $L(vy)=\{a,b\}$ and $L(wx)=\{c,d\}$.

  (i) Since $L(vw)\neq \{a,b,c,d\}$, either $\{a,c\}\not\subset L(vw)$ or $\{b,d\}\not\subset L(vw)$. Assume, w.l.o.g., that $\{a,c\}\not\subset L(vw)$. If $L(vx)\neq \{a,c\}$, then color $vy,wx$ with $a,c$, and color $vx$ with a color in $L(vx)\setminus \{a,c\}$. If $L(vx)=\{a,c\}$, then color $vy,wx,vx$ with $b,c,a$. In any of the above two cases, there are at least two colors still available for $L$-coloring $vw$.

  (ii) $L$-color $vx$ with a color not in $L(vw)$, and $vy, wx$ can then be easily colored. Obviously, there are at least two colors still available for $L$-coloring $vw$ at this stage.
\end{proof}

\begin{thm}\label{R-4-5-6-7-8-9-bar}
Each of the graphs $R_4$ and $\widehat{R}_4,\ldots,\widehat{R}_{9}$ is $L$-colorable.
\end{thm}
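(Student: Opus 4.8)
The plan is to handle the graphs $R_4$ and $\widehat{R}_4,\ldots,\widehat{R}_9$ one at a time, following the same general strategy that drives the proofs of Theorems~\ref{R-0-1-2-3}, \ref{R-1-2-bar} and Lemmas~\ref{graphbelow}, \ref{kite-lemma}: identify a 4-cycle (here $F=vxwy$) or a triangle as the ``hard core'' of the graph, reduce the list-coloring problem on the whole graph to a problem on that core by first coloring peripheral edges and passing to reduced lists $L_0$, and then invoke the already-established palette-counting arguments. Before starting, I would fix notation consistent with the excerpt: $F$ is the 4-cycle $vxwy$ with edges $vx,wx,wy,vy$; in $\widehat{R}_i$ the apex $u$ is joined to $v$ and $w$ by edges with lists of size $4$; and in those $\widehat{R}_i$ that contain the ``below $xy$'' gadget of $\widehat{R}_7$, Lemma~\ref{graphbelow} gives me the three conditions (C1)--(C3) I can enforce on the colors of $xx'$ and $yy'$ for any prescribed color $p$.

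For each graph the key steps, in order, would be: (1) color any pendant triangles or paths hanging off $F$ using Lemma~\ref{color-triangle} or Lemma~\ref{kite-lemma}, recording which colors remain available on the edges incident to $F$; (2) in the $\widehat{R}_i$ containing the $\widehat{R}_7$-gadget, apply Lemma~\ref{graphbelow} to pin down $xx'$ and $yy'$ (choosing $p$ to be whatever single ``bad'' color threatens $uv$ or $uw$), thereby reducing lists on $vx,vy$ (or $wx,wy$) to size at least $2$; (3) if the graph has the apex $u$, invoke Theorem~\ref{R-1-2-bar} (for $\widehat{R}_4,\widehat{R}_5$ say) or reprove its conclusion directly, checking that the exceptional hypotheses (i) ``some color lies in the lists of two opposite edges of $F$'' or (ii) ``$L(vy)=L(wy)$'' can be forced by the earlier choices, or else arguing that when they fail one still gets a coloring by a recoloring swap as in Cases~1--4 of Theorem~\ref{R-1-2-bar}; (4) for the apex-free graph $R_4$ (and any $\widehat{R}_i$ reducing to it) use Theorem~\ref{R-0-1-2-3}(a): after coloring everything outside $F$, either $F\cup\{vw\}$ is colorable outright, or the obstruction forces all edges of $F$ to have $2$-element lists drawn from a fixed $4$-set $\{a,b,c,d\}$ with each color appearing on two \emph{adjacent} edges, and then one exhibits an explicit recoloring of a peripheral edge that frees a fifth color for $vw$. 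Throughout, the guiding principle is the counting bound used in Case~5 of Theorem~\ref{six-useful-graphs} and in Theorem~\ref{R-0-1-2-3}: of the (at most) nine ways to color two disjoint-list edges $vy$ and $wx$, at most $4$ are bad for $L(vw)$, at most $2$ for $L(vx)$, at most $2$ for $L(wy)$, so one good choice survives.

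I expect the main obstacle to be the bookkeeping in the graphs that combine \emph{both} an apex $u$ (creating size-$4$ obstructions $L(uv)=\{a,b',c,d\}$, $L(uw)=\{a',b,c,d\}$ exactly as in Theorem~\ref{R-1-2-bar}) \emph{and} the $\widehat{R}_7$-gadget below $xy$ (which constrains $vx,vy,wx,wy$ through Lemma~\ref{graphbelow} but only after committing to a color $p$). The difficulty is that the color $p$ I must feed to Lemma~\ref{graphbelow} is exactly the ``doubly bad'' color $d$ coming from the apex obstruction, yet that $d$ is only identified after the lists on $F$ are known, which in turn depend on the gadget coloring --- a mild circularity that has to be broken by first proving the conclusion holds whenever hypothesis~(i) or~(ii) of Theorem~\ref{R-1-2-bar} is met, and then showing the gadget can always be colored so as to \emph{create} one of those hypotheses (e.g.\ by duplicating a color on opposite edges of $F$, or by equalizing $L(vy)$ and $L(wy)$). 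Once that interface is set up cleanly, each individual graph should fall to a short case analysis mirroring Cases~1--4 of Theorem~\ref{R-1-2-bar}, with a handful of explicit two- or three-edge recolorings; so the real work is organizing the reductions so that no graph requires an argument not already packaged in Theorems~\ref{six-useful-graphs}, \ref{R-0-1-2-3}, \ref{R-1-2-bar} and Lemmas~\ref{color-triangle}, \ref{graphbelow}, \ref{kite-lemma}.
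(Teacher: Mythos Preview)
Your outline matches the paper's approach for $R_4$ and $\widehat{R}_4,\ldots,\widehat{R}_7$: the paper indeed reduces $\widehat{R}_4$ via Lemma~\ref{color-triangle} to Theorem~\ref{R-1-2-bar}, reduces $\widehat{R}_5$ directly to Theorem~\ref{R-1-2-bar}, and handles $\widehat{R}_6,\widehat{R}_7$ together through Lemma~\ref{graphbelow}. The ``circularity'' you anticipate there is not real: the paper first attempts to color $xy$ so that some edge $e\in E(F)$ keeps three available colors (then Theorem~\ref{R-1-2-bar}(a) finishes); if that is impossible one concludes outright that every edge of $F$ has the \emph{same} list $\{a,b,c,d\}=L(xy)$, after which a single application of (C1) and an explicit coloring of $F$ and the apex triangle suffices. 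No iterative interplay between $p$ and the apex color $d$ is needed.

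The genuine gap in your plan is $\widehat{R}_8$ and $\widehat{R}_9$. These two graphs do \emph{not} contain the $4$-cycle $F=vxwy$ at all (in $\widehat{R}_8$ neither $vx$ nor $wy$ is present; in $\widehat{R}_9$ the edge $wy$ is replaced by $wy'$), so your entire ``hard core $=F$'' framework, as well as Lemma~\ref{graphbelow} and Theorem~\ref{R-1-2-bar}, simply does not apply to them. The paper treats them by separate ad~hoc arguments: for $\widehat{R}_8$ one repeatedly tries to place a color outside the relevant $4$-list on $wy'$, $vx'$, $vy$, or $wx$, each time falling back on the $L_2$-colorability of $T_2$ (Theorem~\ref{six-useful-graphs}), until the lists are pinned down enough for an explicit coloring; for $\widehat{R}_9$ one first passes to an auxiliary graph $\widehat{R}^*_9$ (replacing $wy'$ by $wy$) to invoke $\widehat{R}_1$, and then in the residual case uses Lemma~\ref{kite-lemma} together with $T_2$ and a final swap on $vw,wx$ to defeat the unique obstruction at $uv,uw$. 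Your proposal needs to add these two self-contained case analyses; the tools you list (Theorem~\ref{six-useful-graphs} and Lemma~\ref{kite-lemma}) are exactly the ones required, but the reduction is not through $F$ or the $\widehat{R}_7$-gadget.
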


\begin{proof}
Obviously, if $\widehat{R}_4$ is $L$-colorable then so is $R_4$, and so we will consider just the last
four graphs.

\textbf{Case 1:} $\widehat{R}_4$. By Lemma \ref{color-triangle}, there are distinct colors $a, b, c, d$ such that the edges of the
triangle $syz$ can be $L$-colored with the two edges at $y$ receiving any of the three pairs of
colors $\{a, b\}, \{b, c\}$ and $\{c, d\}$. If $L(wy) \not= \{a, b, c, d\}$ then at least one of these colorings will
leave $wy$ with at least three available colors, and the result will hold since $\widehat{R}_1$ is $L$-colorable
by Theorem \ref{R-1-2-bar}(a). Thus we may assume that $L(wy) = \{a, b, c, d\}$. By symmetry, we may
also assume that $L(vy) = \{a, b, c, d\}$. Now $L$-color $yz$ with color $c$, so that there are at least
two colors ($b$ and $d$) available for $sy$ that enable $sz$ to be colored as well. The coloring can
now be completed by Theorem \ref{R-1-2-bar}(b)(ii) applied to $\widehat{R}_4 \setminus \{yz, sz\}$, which is essentially the
same as $\widehat{R}_2$.

\textbf{Case 2:} $\widehat{R}_5$. If $L(xy)$ contains a color that is not in $L(e)$ for some $e \in E(F)$, color $xy$ with
such a color and then color $yz$; now $e$ still has three available colors, and so the remaining
edges can be colored since $\widehat{R}_1$ is L-colorable by Theorem \ref{R-1-2-bar}(a). So we may assume that
$L(xy) \subseteq L(e)$ for every $e \in E(F)$. If we now color $xy$ with a color from $L(xy) \setminus L(yz)$, then
the remaining two colors from $L(xy)$ are still available for every edge of $F$, and the coloring
can be completed by Theorem \ref{R-1-2-bar}(b)(i).

\textbf{Case 3:} $\widehat{R}_6$ and $\widehat{R}_7$. To enable these graphs to be discussed together, let $x'$ and $y'$ both
denote the vertex $z$ in $\widehat{R}_6$, and note that the conclusion of Lemma \ref{graphbelow} easily holds for $\widehat{R}_6$.

Suppose first that, for some edge $e \in E(F)$, $L(xy)$ contains a color $p \not\in L(e)$. If $e$ is
incident with $y$, $L$-color the edges below $xy$ so that (C2) of Lemma \ref{graphbelow} holds, and then color
$xy$ so that $p$ is used at $y$ (i.e., color $xy$ with $p$ if $yy'$ is not already colored with $p$); if $e$ is
incident with $x$, do the same with $x$ and $y$ interchanged, using (C3). Then the edge $e$ still
has at least three available colors, and so the remaining edges can be colored since $\widehat{R}_1$ is
$L$-colorable by Theorem \ref{R-1-2-bar}. We deduce from this that $L(e) = L(xy) = \{a, b, c, d\}$, say, for
every edge $e \in E(F)$.

Now color the edges below $xy$ so that (C1) holds, say $xx'$ and $yy'$ have different colors $a'$
and $b'$ respectively, and relabel $a, b, c, d$ if necessary so that $a'\not\in \{b, c, d\}$ and $b' \not\in \{a, c, d\}$.
Then the coloring can be completed as follows. Color $xy$ with $d$. If $a \not\in L(uv)$ then color the
edges $vx, xw,wy, yv$ of $F$ with $c, b, c, a$ and color the remaining edges in the order $vw, uw, uv$;
otherwise, if $a \in L(uv)$, color the edges of $F$ with $b, c, a, c$, color uv with $a$, and then color
$vw$ and $uw$.

\textbf{Case 4:} $\widehat{R}_8$. If possible, $L$-color $wy'$ with a color in $L(wy')\setminus L(uw)$, and then it is easy to color the remaining edges in this order $yy',vy,wx,xx',vw,vx',uv,uw$. So we assume that $L(uw)=L(wy')=\{a,b,c,d\}$. By symmetry, we also assume that $L(uv)=L(vx')$.

If possible, $L$-color $wy'$ with a color in $L(wy')\setminus L(vw)$, and color in this order $yy',vy,wx,xx'$; the remaining edges can then be colored since $T_2$ is $L_2$-colorable as in Theorem \ref{six-useful-graphs}(b). So we assume that $L(vw)=L(wy')=\{a,b,c,d\}$, and by symmetry, assume that $L(vx')=L(vw)=\{a,b,c,d\}$. This implies that $L(uv)=\{a,b,c,d\}$.

If possible, $L$-color $vy$ with a color in $L(vy)\setminus L(vw)$, and continue the coloring in order along the walk $yy'wxx'$; the remaining edges can then be colored since $T_2$ is $L_2$-colorable. So we assume that $L(vy)\subset L(vw)$, and by symmetry, assume that $L(wx)\subset L(vw)$.

If possible, $L$-color $vy$ and $wx$ with a same color, and color in this order $yy',wy',xx'$; the remaining edges can then be colored since $T_2$ is $L_2$-colorable. So we assume, w.l.o.g., that $L(vy)=\{a,b\}$ and $L(wx)=\{c,d\}$.

Since either $L(yy')\neq \{a,d\}$ or $L(yy')\neq \{b,d\}$, we assume, w.l.o.g., that $L(yy')\neq \{a,d\}$. Now color $uv,uw,vw,vx',vy,wx,wy'$ with $d,a,b,c,a,c,d$, respectively, and then $xx'$ and $yy'$ can be easily colored.

\textbf{Case 5:} $\widehat{R}_9$. Remove the edge $wy'$ and add a new edge $wy$ so that $L(wy)=L(wy')$. It is easy to see that the resulting graph $\widehat{R}^*_9$ is $L$-colorable only if $\widehat{R}_9$ is $L$-colorable.

 We now consider $\widehat{R}^*_9$. If possible, $L$-color $yy'$ with a color in $L(yy')\setminus L(vy)$; the remaining edges can then be colored since $\widehat{R}_1$ is $L$-colorable by Theorem \ref{R-1-2-bar}(a). So we assume that $L(yy')=L(vy)=\{a,b\}$.

In the following we consider $\widehat{R}_9$. If possible, $L$-color $vy$ and $wx$ with a same color, and color in this order $yy',vx$; the remaining edges can then be colored since $T_2$ is $L_2$-colorable. So we assume that $L(vy)=\{c,d\}$.

If $L(vw)\neq L(vy)\cup L(wx)$ or $L(vx)\not\subset L(vw)$, then by Lemma \ref{kite-lemma} we can $L$-color $vy,wx,vx$ so that there are at least two colors still available for $L$-coloring $vw$. Now it is easy to color $yy'$ and the remaining edges can be $L$-colored since $T_2$ is $L_2$-colorable. So we assume that $L(vx)\subset L(vw)= L(vy)\cup L(wx)=\{a,b,c,d\}$.

Since either $L(vx)\neq \{a,c\}$ or $L(vx)\neq \{b,c\}$, we assume, w.l.o.g., that $L(vx)\neq \{a,c\}$. Color $vy,yy',wx$ with $a,b,c$, and $L$-color $vx,vw,wy'$ with $p\not\in \{a,c\}$, $q\not\in \{a,c,p\}$, $r\not\in  \{b,c,q\}$, respectively. Note that $p\in L(vx)\subset\{a,b,c,d\}$, $q\in L(vw)=\{a,b,c,d\}$ and $q\neq p$. Thus
$\{p,q\}=\{b,d\}$.

We now finish the $L$-coloring of $\widehat{R}_9$ by coloring $uv$ and $uw$ properly. The only problem is if there is a color $s$ so that $L(uv)=\{a,p,q,s\}$ and $L(uw)=\{c,q,r,s\}$.

If $|\{b,c,q\}|=2$, then erase the color on $wy'$ and $L$-color $wy'$ with a color $r'\not\in  \{b,c,q,r\}$. Hence we can finish the $L$-coloring of $\widehat{R}_9$ by coloring $uv,uw$ with $s,r$. So we assume that $|\{b,c,q\}|=3$, which implies that $q\neq b$, and then $q=d,p=b$.

Under this condition, we exchange the colors on $vw$ and $wx$, and then color $uv,uw$ with $d,s$. This completes the proof since we get an $L$-coloring of $\widehat{R}_9$.
\end{proof}

\section{Proof of the main results}

\begin{thm}\label{core-theorem}
Let $H$ be a graph and let $L(e)$ be a list of four colors, for each $e\in E(H)$. If $H$ is not edge $L$-colorable but every proper subgraph of $H$ has an edge $L$-coloring, then $H$ does not contain any of the configurations $G_1,\ldots,G_{14}$, $S_1,S_2$ or $S_3$
\end{thm}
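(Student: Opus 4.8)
The plan is to show that each of the seventeen configurations $G_1,\ldots,G_{14}$, $S_1,S_2,S_3$ is \emph{reducible}, i.e.\ cannot occur in a graph $H$ that is not edge $L$-colorable (all lists of size $4$) but all of whose proper subgraphs are. I would treat the configurations one at a time, always by the same scheme. Given a copy of some configuration inside $H$, single out the set $F$ of its edges that are ``internal'' to the picture — essentially the edges incident to the solid, low-degree vertices. Delete $F$ (removing any vertex left isolated) to get a proper subgraph $H^{\ast}\subsetneq H$, which by minimality has an edge $L$-colouring $\varphi$. For each $e\in F$ set $L^{\ast}(e)=L(e)\setminus\{\varphi(f):f\in E(H^{\ast}),\ f\sim e\}$, so that $|L^{\ast}(e)|$ equals $4$ minus the number of edges of $H$ meeting $e$ but lying outside $F$. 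The degree restrictions built into the configuration (solid vertices have exactly their drawn degree) pin down this count at every edge, and — this is the point of the whole construction — it matches exactly the list size attached to the corresponding edge of one of the auxiliary graphs $T_1,\ldots,T_6$, $R_0,\ldots,R_4$, $\widehat{R}_1,\ldots,\widehat{R}_9$. Thus extending $\varphi$ to all of $H$ reduces to $L^{\ast}$-colouring a known gadget, contradicting the non-$L$-colourability of $H$.

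Carrying this out: the smallest configurations — an edge between two $2$-vertices, a triangle with pendant $2$-vertices, and so on (e.g.\ $G_1$) — reduce to list-colouring a short path or an even cycle and are disposed of directly, since paths and even cycles are edge-$2$-choosable. The configurations organised around a single crossing or a ``kite'' ($G_2,\ldots,G_{10}$, $G_{12}$) reduce to one of $T_1,\ldots,T_6$ via Theorem \ref{six-useful-graphs}, or to $R_0,\ldots,R_3$ and $\widehat{R}_1,\widehat{R}_2$ via Theorems \ref{R-0-1-2-3} and \ref{R-1-2-bar}; whenever the hypothesis of the relevant lemma fails — for instance $L_1(wx)\cap L_1(vy)=\emptyset$ in $T_1$, the non-exceptional case of $R_0$, or condition (i)/(ii) for $\widehat{R}_2$ — the residual instance can instead be completed by a direct greedy argument, so $\varphi$ extends in every case. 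The heavier configurations $G_{11},G_{13},G_{14}$ and in particular $S_1,S_2,S_3$ are precisely the ones engineered to match $R_4$ and $\widehat{R}_4,\ldots,\widehat{R}_9$ of Theorem \ref{R-4-5-6-7-8-9-bar} (with Lemmas \ref{color-triangle}, \ref{graphbelow} and \ref{kite-lemma} doing the internal work); for these one must choose $F$ so that the deleted subgraph, with its residual lists, is genuinely one of the $\widehat{R}_i$, keeping careful track of which edges retain lists of size $4$ and which drop to size $3$ or $2$.

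The list-size bookkeeping is routine and I would state it once and then invoke it. The delicate part, and the main obstacle, is the exact matching for the larger configurations: after the deletion one must check that the residual instance is \emph{precisely} one of the named gadgets — same underlying graph, same pattern of list sizes — rather than something merely resembling it, and, when a gadget lemma carries a side hypothesis, one must verify that failure of that hypothesis still permits a direct extension. This is where essentially all of the case analysis lives, and where the specific shapes of $S_1,S_2,S_3$ (and, correspondingly, of $\widehat{R}_8,\widehat{R}_9$ in Theorem \ref{R-4-5-6-7-8-9-bar}) have been chosen so that the reduction goes through. Once all seventeen configurations are shown reducible, the theorem follows immediately, because a minimal non-$L$-colourable $H$ can contain no reducible configuration.
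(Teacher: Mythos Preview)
Your overall scheme is exactly the paper's: remove the solid vertices, $L$-colour the resulting proper subgraph by minimality, compute the residual lists $A(e)$ on the deleted edges, and observe that the deleted graph with these list sizes is (isomorphic to) one of the auxiliary gadgets already shown to be list-colourable. The paper records the correspondence in a table and handles $G_1$ and $G_3$ directly (a two-edge path and a $4$-cycle).

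Two points where your sketch drifts from the paper are worth correcting. First, the conditional gadgets $T_1$, $R_0$ and $\widehat{R}_2$ are never invoked directly in the reduction; they are intermediate lemmas used only inside the proofs of the unconditional results (Theorems~\ref{six-useful-graphs}(b), \ref{R-0-1-2-3}(b), \ref{R-1-2-bar}(a), \ref{R-4-5-6-7-8-9-bar}). Every configuration reduces to a gadget that is \emph{unconditionally} $L$-colourable, so your ``greedy fallback when the side hypothesis fails'' is neither needed nor obviously valid. Second, your grouping is slightly off: in the paper $G_7$ and $G_8$ reduce to $\widehat{R}_8$ and $\widehat{R}_9$ (Theorem~\ref{R-4-5-6-7-8-9-bar}), not to any $T_i$ or small $R_i$; and $G_{11}$ does not appear in the paper's reduction table at all --- it is not needed, since the structural result actually used (Theorem~\ref{mainthm-2}) already omits $G_{11}$ from its list. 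So do not try to reduce $G_{11}$; the statement's ``$G_1,\ldots,G_{14}$'' should be read as excluding it, consistently with the proof and with Theorem~\ref{mainthm-2}.
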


\begin{proof}
We consider each configuration $C$ separately. We let $H¡ä$ be the subgraph of $H$ obtained by removing all the solid vertices in $C$. We choose an edge $L$-coloring of $H¡ä$, which
exists by hypothesis, and for each edge $e \in E(H) \setminus E(H¡ä)$ we denote by $A(e)$ the set of available colors for $e$, comprising those colors in $L(e)$ that are not used on any colored edge adjacent to $e$. If we can prove that $H-E(H¡ä)$ is edge $A$-colorable, then it will follow that $H$ is edge $L$-colorable, and this contradiction will show that $H$ cannot contain the configuration $C$. In each case, it suffices to describe how to construct an edge $A$-coloring of $H-E(H¡ä)$.

In every case except for $G_1$ and $G_3$, $H-E(H¡ä)$ is isomorphic to a graph that has already been proved to be $A$-colorable for a suitable list assignment $A$, as shown in this table.

\begin{center}
\begin{tabular}{ c c c c c c c c c c c c c c c}
  Configuration & $G_2$ & $G_4$ & $G_5$ & $G_6$ & $G_7$ & $G_8$ & $G_9$ & $G_{10}$ & $G_{12}$ & $G_{13}$ & $G_{14}$ & $S_1$ & $S_2$ & $S_3$ \\
  Equivalent graph & $T_3$ & $T_4$ & $T_6$ & $T_5$ & $\widehat{R}_8$ & $\widehat{R}_9$ & $\widehat{R}_1$ & $R_1$ & $R_3$ & $\widehat{R}_4$ & $R_4$ & $\widehat{R}_5$ & $\widehat{R}_6$ & $\widehat{R}_7$ \\
  Theorem & \ref{six-useful-graphs} & \ref{six-useful-graphs} & \ref{six-useful-graphs} & \ref{six-useful-graphs} & \ref{R-4-5-6-7-8-9-bar} & \ref{R-4-5-6-7-8-9-bar} & \ref{R-1-2-bar} & \ref{R-0-1-2-3} & \ref{R-0-1-2-3} & \ref{R-4-5-6-7-8-9-bar} & \ref{R-4-5-6-7-8-9-bar} & \ref{R-4-5-6-7-8-9-bar} & \ref{R-4-5-6-7-8-9-bar} & \ref{R-4-5-6-7-8-9-bar} \\
\end{tabular}
\end{center}

It remains to consider the graphs $G_1$ and $G_3$. In $G_1$, let $u$ denote the solid vertex and let $x$ denote the left vertex in that picture. It is easy to see that $|A(ux)|\geq 1$ and $|A(uy)|\geq 2$, and so we can color the edges in the order $ux,uy$. In $G_3$, the result holds since every edge of the 4-cycle has at least two available colors and it is well-known that a 4-cycle is edge 2-choosable. This proves Theorem \ref{core-theorem}.
\end{proof}

\proof[\textbf{Proofs of Theorems \ref{o1p} and \ref{3}}] Let $H$ be the minimum counterexample to the theorem and let $L(e)$ be a list of four colors for each $e\in E(H)$.
First of all, it is easy to see that $\delta(H)\geq 2$. Since every proper subgraph of $H$ has an edge $L$-coloring, $H$ does not contain any of the configurations listed in Theorem \ref{core-theorem}.
On the other hand, if $\Delta(H)=4$ and $\vartheta(H)\geq 3$, then $H$ contains one of the configurations $G_1,\ldots,G_{14}$, $S_1,S_2$ or $S_3$ by Theorem \ref{mainthm-2}, contradicting Theorem \ref{core-theorem}. If $\Delta(G)=3$, then $H$ contains the configuration $G_1$ or $G_3$ or $G_{10}$, which is an immediate corollary from the result of Zhang, Liu and Wu \cite[Theorem 4.2]{ZPOPG}. However, this also contradicts Theorem \ref{core-theorem}, which implies that $H$ does not contain $G_1$ or $G_3$ or $G_{10}$. $\hfill\square$

%

\end{document}